\newtheorem{theorem}{Theorem}
\newtheorem{corollary}[theorem]{Corollary}
\newtheorem{example}[theorem]{Example}
\newtheorem{lemma}[theorem]{Lemma}
\newtheorem{proposition}[theorem]{Proposition}
\newtheorem{remark}[theorem]{Remark}
\begin{document}
\title{Shorted\ Operators Relative to a Partial Order in a Regular Ring}
\author{Brian Blackwood}
\address{Department of Mathematics, Potomac State College, West Virginia University, Keyser, West Virginia, USA}
\email{Brian.Blackwood@mail.wvu.edu}
\author{S. K. Jain}
\address{Department of Mathematics, Ohio University, Athens, Ohio-45701, USA}
\email{jain@math.ohiou.edu}
\author{K. M. Prasad}
\address{Manipal Institue of Management\\
Manipal University\\
Manipal 576 104, Karnataka, India}
\email{karantha.prasad@gmail.com}
\author{Ashish K. Srivastava}
\address{Department of Mathematics and Computer Science, St. Louis University, St.
Louis, MO-63103, USA}
\email{asrivas3@slu.edu}
\keywords{von Neumann regular ring, partial order, shorted operator.}
\subjclass[2000]{06A06, 06A11, 15A09, 16U99}

\begin{abstract}
In this paper, the explicit form of maximal elements, known as shorted operators, in a subring of a von Neumann regular ring has been obtained. As an application of the main theorem, the unique shorted operator (of electrical circuits) which was introduced by Anderson-Trapp has been derived.
\end{abstract}

\maketitle

\section{\protect\bigskip Introduction}

Various partial orders on an abstract ring or on the ring of matrices over
the real and complex numbers have been introduced by several authors either
as an abstract study of questions in algebra, or for the study of problems
in engineering and statistics (See, e.g. \cite{A1}, \cite{A2}, \cite{D1}, 
\cite{J4}, \cite{L4}, and \cite{M3}). \ Also, a partial order on semigroups
is studied by several authors (See, e.g. \cite{H1}, \cite{M6}, and \cite{N1}%
). \ In this paper we study the well-known minus partial order on a von
Neumann regular ring which is simply a generalization of a partial order on
the set of idempotents in a ring introduced by Kaplansky. \ For any two
elements $a,b$ in a von Neumann regular ring $R$, we say $a\leq ^{-}b$ (and
read it as $a$ is less than or equal to $b$ under the \textit{minus partial
order)} if$\ $there exists an $x\in R$ such that $ax=bx$ and $xa=xb$ where $%
axa=a$. \ Furthermore, we define the partial order $\leq ^{\oplus }$ by
saying that $a\leq ^{\oplus }b$ if $bR=aR\oplus (b-a)R,$ and call it the 
\textit{direct sum partial order}. \ The \textit{Loewner partial order} on
the set of positive semidefinite matrices $S$ is defined by saying that for $%
a,b\in S$, $a\leq _{L}b$ if $b-a\in S$. \ The direct sum partial order is
shown to be equivalent to the minus partial order on a von Neumann regular
ring. It is known that the minus partial order on the subset of positive
semidefinite matrices in the matrix ring over the field of complex numbers
implies the Loewner partial order.\ \ The main result of this paper gives an
explicit description of maximal elements in a subring under minus partial
order (Theorem 13). \ As a special case, we obtain a result similar to the
one obtained by Mitra-Puri (\cite{M3}, Theorem $2.1$) for the unique shorted operator; which,
in turn, is equivalent to the formula of Anderson-Trapp (\cite{A2}, Theorem
$1$) for computing the shorted operator of a shorted electrical circuit
(Theorem 17).

\bigskip

\section{Definitions}

Throughout this paper, $R$ is a ring with identity. An element $a\in R$ is
called von Neumann regular if $axa=a$ for some $x\in R$ and $x$ is called a
von Neumann inverse of $a$. We will denote an arbitrary von Neumann inverse
of $a$ by $a^{(1)}$. An element $a\in R$ is called weakly regular if $xax=x$
for some $x\in R$ and $x$ is called a weak von Neumann inverse of $a$. We
will denote a weak von Neumann inverse of $a$ by $a^{(2)}$. If $axa=a$ and $%
xax=x$, then $x$ is called a strong von Neumann inverse of $a$. We will
denote a strong von Neumann inverse of $a$ by $a^{(1,2)}$. A ring $R$ is
called von Neumann regular if every element in $R$ is von Neumann regular.
For convenience, we will use the terminology \textit{regular ring} in place
of \textit{von Neumann regular ring}. For details on regular ring, the reader is referred to \cite {G1}.

Let $S$ be the set of all regular elements in any ring $R$. \ For $a,b\in S$
we say that $a\leq ^{-}b$ if there exists a von Neumann inverse $x$ of $a$
such that $ax=bx$ and $xa=xb$. \ This is known as the minus partial order as
stated above for regular rings. The minus partial order clearly generalizes
the definition of Kaplansky according to which if $e,f$ are idempotents then 
$e\leq f$ if $ef=e=fe.$\ 

We remark that for the ring of matrices over a field, it is known that $%
a\leq ^{-}b$ if and only if $rank(b-a)=rank(b)-rank(a)$.

Let $T$ be a ring with involution *. If $x$ is a strong von Neumann inverse
of $a$ such that $(ax)^{\ast }=ax$, $(xa)^{\ast }=xa$ and $ax=xa$ then $x$
is called the \textit{Moore-Penrose inverse} of $a$ and is denoted by $%
a^{\dag }$. \ Let $M$ be the set of positive semidefinite matrices. For $%
w\in M$ and $b\in T$, $x$ is called the unique $w$\textit{-weighted
Moore-Penrose inverse} of $b$ if $x$ is a strong von Neumann inverse of $b$
and satisfies $(wbx)^{\ast }=wbx$ and $(wxb)^{\ast }=wxb$. For details on
Moore-Penrose inverse, one may refer to Rao-Mitra \cite{R1} or Ben-Israel and
Greville \cite{B2}.\ \ 

\bigskip

\section{\protect\bigskip Preliminary Results}

The following result of Jain and Prasad (\cite{J5}, Theorem 1) will prove to be useful
throughout this paper and, specifically, for providing an equivalent
definition of the minus partial order on a regular ring.

\begin{theorem}
\label{p1}Let R be a ring and let $a,b\in R$ such that $a+b$ is a regular
element. \ Then the following are equivalent:

\begin{enumerate}
\item  $aR\oplus bR=(a+b)R;$

\item  $Ra\oplus Rb=R(a+b);$

\item  $aR\cap bR=(0)=Ra\cap Rb.$
\end{enumerate}
\end{theorem}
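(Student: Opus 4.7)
The plan is to establish $(1) \Rightarrow (3)$, $(2) \Rightarrow (3)$, $(3) \Rightarrow (1)$, and $(3) \Rightarrow (2)$, closing the equivalence. Throughout, I would fix a von Neumann inverse $y$ of $c := a+b$, and exploit the two idempotents this produces: $e := cy$ (satisfying $ec = c$) and $f := yc$ (satisfying $cf = c$). These two idempotents will drive the two nontrivial directions separately.

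For $(1) \Rightarrow (3)$, the only nontrivial content is $Ra \cap Rb = (0)$, since $aR \cap bR = (0)$ is already built into the direct sum. Under (1) we have $a \in aR \subseteq (a+b)R$, so I would write $a = (a+b)p$ for some $p \in R$. Rearranging gives $a(1-p) = bp$, with left side in $aR$ and right side in $bR$; since $aR \cap bR = (0)$, both sides vanish, yielding $ap = a$ and $bp = 0$. For any $x \in Ra \cap Rb$, write $x = r_{1}a = r_{2}b$; then $xp = r_{1}(ap) = r_{1}a = x$ while $xp = r_{2}(bp) = 0$, forcing $x = 0$. The implication $(2) \Rightarrow (3)$ is the mirror version, using instead the inclusion $Ra \subseteq R(a+b)$ and the identity $Ra \cap Rb = (0)$ to produce a $q$ with $qa = a$ and $qb = 0$, which then kills any $x \in aR \cap bR$.

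For $(3) \Rightarrow (1)$, I would apply $e = cy$ to the identity $a+b = c = ec = ea + eb$ to obtain $ea - a = -(eb - b)$. The left side $(e-1)a$ lies in $Ra$ and the right side $(1-e)b$ lies in $Rb$, so the hypothesis $Ra \cap Rb = (0)$ forces $ea = a$ and $eb = b$. Hence $a = c(ya) \in cR$ and $b = c(yb) \in cR$, giving $aR + bR \subseteq cR$; the reverse inclusion is trivial since $(a+b)r = ar + br$. Combined with the given $aR \cap bR = (0)$, this is precisely (1). The implication $(3) \Rightarrow (2)$ is the symmetric version using $f = yc$: from $(a+b)f = c = a+b$ one gets $(a - af) = -(b - bf)$, with the left side in $aR$ and the right in $bR$, so $aR \cap bR = (0)$ yields $af = a$ and $bf = b$, placing $a,b \in Rc$.

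I do not foresee a substantive obstacle. The essential trick is that once $c = a+b$ is regular, the idempotent $cy$ (respectively $yc$) produces a relation equating an element of $Ra$ with an element of $Rb$ (respectively of $aR$ with $bR$); the corresponding one-sided intersection hypothesis then separates the two sides and pins $a$ and $b$ individually into $cR$ (respectively $Rc$). The only point requiring any care is bookkeeping: which half of (3) drives which direction, and the observation that (1) alone suffices to recover both halves of (3) because the direct-sum decomposition of $(a+b)R$ already encodes enough structure to locate a right unit $p$ for $a$ and a right annihilator of $b$ simultaneously.
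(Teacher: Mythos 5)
Your proof is correct. Note that the paper does not actually prove this theorem---it is imported verbatim from Jain--Prasad \cite{J5}---so there is no in-paper argument to compare against; what you have written is a valid, self-contained replacement. Each step checks: in $(1)\Rightarrow(3)$ the element $p$ with $a=(a+b)p$ yields $a(1-p)=bp\in aR\cap bR=(0)$, hence $ap=a$ and $bp=0$, and right multiplication by $p$ then annihilates $Ra\cap Rb$; in $(3)\Rightarrow(1)$ the identity $ea+eb=a+b$ with $e=cy$ places the common value $(e-1)a=(1-e)b$ in $Ra\cap Rb=(0)$, so $ea=a$ and $eb=b$, putting $a,b\in cR$; the remaining two implications are exact mirrors. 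Your bookkeeping is also right about where hypotheses are used: regularity of $a+b$ enters only in the directions out of $(3)$, to manufacture the idempotents $cy$ and $yc$, while $(1)\Rightarrow(3)$ and $(2)\Rightarrow(3)$ need only that $R$ has an identity (so that $a\in aR$). Finally, the logical scheme $(1)\Rightarrow(3)$, $(2)\Rightarrow(3)$, $(3)\Rightarrow(1)$, $(3)\Rightarrow(2)$ does close the equivalence, since it gives $(1)\Leftrightarrow(3)\Leftrightarrow(2)$. The argument is exactly the kind of elementary one-page proof the cited reference contains, and I see no gap.
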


\bigskip

From Rao-Mitra (\cite{R1}, Theorem 2.4.1, page 26), we have the following nice characterization of $%
\{a^{(1)}\}$ and $\{a^{(1,2)}\}$.

\begin{lemma}
\label{p2}Let R be a ring and let $a\in R$. \ If $x\in \{a^{(1)}\}$ then $%
\{a^{(1)}\}=x+(1-xa)R+R(1-ax).$ \ In addition, $\{a^{(1,2)}\}=%
\{a^{(1)}aa^{(1)}\}$.
\end{lemma}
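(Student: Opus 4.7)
The plan is to handle the two assertions separately. For the first assertion, the inclusion $x+(1-xa)R+R(1-ax)\subseteq\{a^{(1)}\}$ is a direct computation: for any $u,v\in R$, setting $y=x+(1-xa)u+v(1-ax)$ and expanding $aya$, the middle terms vanish because $a(1-xa)=a-axa=0$ and $(1-ax)a=a-axa=0$, leaving $aya=axa=a$. So this direction is immediate.

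The heart of the first assertion is the reverse inclusion. Given $y\in\{a^{(1)}\}$, I would write $y=x+(y-x)$ and show that $w:=y-x$ belongs to $(1-xa)R+R(1-ax)$. The key observation is that $awa=aya-axa=a-a=0$. From this I would decompose
\[
w=(1-xa)w+xaw,
\]
and then rewrite the second summand as
\[
xaw=xaw(1-ax)+xawax=xaw(1-ax)+x(awa)x=xaw(1-ax),
\]
since $awa=0$. Thus $w=(1-xa)w+xaw(1-ax)\in(1-xa)R+R(1-ax)$, completing the proof of the first assertion. The main subtlety here is recognizing that the annihilation condition $awa=0$ is exactly what is needed to absorb the stray term $xawax$; everything else is bookkeeping.

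For the second assertion, I interpret $\{a^{(1)}aa^{(1)}\}$ as the set of products $xay$ with $x,y\in\{a^{(1)}\}$. The inclusion $\{a^{(1,2)}\}\subseteq\{a^{(1)}aa^{(1)}\}$ is the easy direction: any strong von Neumann inverse $y$ satisfies $y=yay$, and taking both factors equal to $y\in\{a^{(1)}\}$ exhibits $y$ as an element of $\{a^{(1)}aa^{(1)}\}$. For the other inclusion, given $z=xay$ with $x,y\in\{a^{(1)}\}$, I would verify $aza=a$ and $zaz=z$ by direct computation, using the identity $axa=a$ (respectively $aya=a$) to collapse the relevant substrings. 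Concretely, $aza=(axa)ya=aya=a$, and
\[
zaz=xayaxay=x(aya)xay=xaxay=x(axa)y=xay=z.
\]
So $z\in\{a^{(1,2)}\}$. I expect no serious obstacle in this part; it is essentially a matter of parsing the products correctly and applying $axa=a$ at the right places.
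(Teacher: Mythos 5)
Your proof is correct. Note that the paper itself offers no proof of this lemma: it is quoted verbatim from Rao--Mitra (Theorem 2.4.1 of \emph{Generalized Inverse of Matrices and Its Applications}), so there is no in-paper argument to compare against. Your verification is the standard one and all the computations check out: the containment $x+(1-xa)R+R(1-ax)\subseteq\{a^{(1)}\}$ follows from $a(1-xa)=0=(1-ax)a$; the reverse containment correctly isolates the key identity $awa=0$ for $w=y-x$ and uses the decomposition $w=(1-xa)w+xaw(1-ax)+x(awa)x$; and for the second assertion you rightly read $\{a^{(1)}aa^{(1)}\}$ as the set of products $xay$ with $x,y$ ranging independently over $\{a^{(1)}\}$ (the Rao--Mitra convention), after which $aza=a$ and $zaz=z$ follow by repeatedly collapsing $axa=a$ and $aya=a$, while the inclusion $\{a^{(1,2)}\}\subseteq\{a^{(1)}aa^{(1)}\}$ is immediate from $y=yay$. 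A self-contained proof like yours is arguably a small improvement over the paper's bare citation, since the lemma is used essentially in the proof of Lemma \ref{p3}.
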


We now investigate properties of the direct sum partial order and its
relation to the minus partial order. \ 

Let $R$ be a regular ring. \ Recall $a\leq ^{\oplus }b$ if and only if $%
bR=aR\oplus (b-a)R$. \ By Theorem 1, this is equivalent to $Rb=Ra\oplus
R(b-a)$.\ \ It is straightforward to see that $\leq ^{\oplus }$ is a partial
order.

Next we show that the minus partial order is equivalent to the direct sum
partial order on a regular ring. \ Hartwig-Luh showed that, when $%
R $ is a regular ring, $(2)$ is equivalent to $(3)$ with the additional
hypothesis that $a\in bRb$ (see \cite{M4}, page $5$).

\begin{lemma}
\label{p3}Let $R$ be a regular ring and $a,b\in R$. \ Then the following are
equivalent:
\end{lemma}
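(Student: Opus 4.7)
The plan is to prove the equivalence $a \leq^- b \Leftrightarrow a \leq^\oplus b$ (together with any additional sided or ``sandwich'' characterization the paper lists as a third condition), by going around a cycle of implications that exploits Theorem \ref{p1} to convert right-sided information into left-sided information.

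For the direction $a \leq^- b \Rightarrow bR = aR \oplus (b-a)R$, I would start from a von Neumann inverse $x$ of $a$ with $ax=bx$ and $xa=xb$ and work purely algebraically. Setting $c = b - a$, I first obtain $a = axa = bxa$, so $a \in bR$; then $c = b - bxa = b(1-xa) \in bR$, so $aR + cR \subseteq bR$. The reverse containment is trivial from $b = a + c$. For directness, suppose $ar = cs$. Left multiplying by $x$ gives $xar = xcs = (xb-xa)s = 0$, and then left multiplying by $a$ gives $ar = axar = 0$. This yields $aR \cap cR = 0$. (By symmetry, the same computation from the other side produces $Ra \cap Rc = 0$ and the left-sided decomposition $Rb = Ra \oplus Rc$, which is useful for proving a third equivalent condition if one is stated.)

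For the reverse direction $bR = aR \oplus (b-a)R \Rightarrow a \leq^- b$, I would first invoke Theorem \ref{p1} to conclude that simultaneously $Rb = Ra \oplus R(b-a)$ and $aR \cap (b-a)R = 0 = Ra \cap R(b-a)$. Because $a \in bR$, write $a = by$ for some $y \in R$; expanding, $ay + (b-a)y = a$, so $(b-a)y = a - ay \in aR \cap (b-a)R = 0$. This forces $ay = a$ and $(b-a)y = 0$. Symmetrically, from $a \in Rb$, I get $z \in R$ with $za = a$ and $z(b-a)=0$. Now fix any von Neumann inverse $a^{(1)}$ of $a$ (which exists since $R$ is regular) and set
\[
x = y\, a^{(1)}\, z.
\]
A direct computation gives $axa = (ay)\,a^{(1)}\,(za) = a\,a^{(1)}\,a = a$, so $x \in \{a^{(1)}\}$, while $(b-a)x = (b-a)y\cdot a^{(1)}z = 0$ and $x(b-a) = y\,a^{(1)}\,z(b-a) = 0$, which give exactly $ax=bx$ and $xa=xb$, i.e., $a \leq^- b$.

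The main obstacle is the $(2) \Rightarrow (1)$ direction, because the minus partial order demands the \emph{existence} of a single element $x$ that is simultaneously a von Neumann inverse of $a$ and a two-sided annihilator of $b-a$. The key trick that unlocks this is realizing that Theorem \ref{p1} lets one extract a right factor $y$ killing $b-a$ on the right and, symmetrically, a left factor $z$ killing $b-a$ on the left; sandwiching an arbitrary $a^{(1)}$ between them produces the required $x$ without any choice-dependence. If the lemma lists a third equivalent formulation (such as the existence of $x$ with $a = axb = bxa$ and $axa = a$, or the symmetric Hartwig--Luh-type condition involving $a \in bRb$), I would close the cycle by extracting that condition from the explicit $x$ just constructed, and conversely deriving the decomposition $bR = aR \oplus (b-a)R$ from such an $x$ via the same computation as in the $(1) \Rightarrow (2)$ step.
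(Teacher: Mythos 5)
Your equivalence of $a\leq^{-}b$ and $a\leq^{\oplus}b$ is correct, and for one direction it is more direct than the paper's. For $(\leq^{-})\Rightarrow(\leq^{\oplus})$, your computation $a=bxa$, $b-a=b(1-xa)$, and $ar=a(xar)=0$ whenever $ar=(b-a)s$, is essentially the ``brief argument'' the paper alludes to but does not write out (it instead routes this implication through a third condition). For the converse, your device of extracting $y$ with $ay=a$, $(b-a)y=0$, and, via Theorem~\ref{p1}, a left-sided $z$ with $za=a$, $z(b-a)=0$, then sandwiching $x=ya^{(1)}z$, is a clean alternative to the paper's choice $x=gag$ with $g\in\{b^{(1)}\}$; both computations check out.

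The gap is the third listed condition, which you could only guess at: it is $\{b^{(1)}\}\subseteq\{a^{(1)}\}$, i.e.\ every von Neumann inverse of $b$ is a von Neumann inverse of $a$. Getting \emph{to} this condition is easy from your $x$ (since $a=axb=bxa$, one has $ab^{(1)}a=ax(bb^{(1)}b)xa=axbxa=a$), but the implication $\{b^{(1)}\}\subseteq\{a^{(1)}\}\Rightarrow a\leq^{\oplus}b$ is the genuinely hard step, and nothing in your proposal produces it. The paper's argument parametrizes $\{b^{(1)}\}=g+(1-gb)R+R(1-bg)$ by Lemma~\ref{p2}, deduces $a(1-gb)Ra=(0)=aR(1-bg)a$ by varying the parameters, observes that $a(1-gb)R$ and $R(1-bg)a$ are then nilpotent one-sided ideals, and invokes the fact that a regular ring has no nonzero nilpotent one-sided ideals to conclude $agb=a=bga$, from which the direct sum decomposition follows. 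That semiprimeness argument is the missing idea; without it the cycle of implications does not close through condition (3).
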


\begin{enumerate}
\item  $a\leq ^{\oplus }b;$

\item  $a\leq ^{-}b;$

\item  $\{b^{(1)}\}\subseteq \{a^{(1)}\}.$
\end{enumerate}

\begin{proof}
$(1)\Longrightarrow (2):$ \ As $a\leq ^{\oplus }b$, $bR=aR\oplus (b-a)R$. \ It
follows that $aR\subseteq bR$. $\ $Hence, $a\in bR$ and thus $a=bx$ for some 
$x\in R$. \ As $R$ is a regular ring, for any $g\in \{b^{(1)}\}$, $bgb=b$. \
Thus $bga=bg(bx)=(bgb)x=bx=a$. \ Now $aga=bga-(b-a)ga=a-(b-a)ga$. \ Thus $%
a-aga=(b-a)ga$. \ But $aR\cap (b-a)R=(0)$ and $a-aga=(b-a)ga\in aR\cap
(b-a)R $. \ Hence $a-aga=0$ and $(b-a)ga=0$. \ Therefore $aga=a=bga$ and
hence $\{b^{(1)}\}\subseteq \{a^{(1)}\}$. \ Indeed, this demonstrates that $%
(1)\Longrightarrow (3).$ \ Now choose $x=gag$. \ Then $axa=a(gag)a=aga=a$ and $%
x\in \{a^{(1)}\}$. \ Now $bx=(bga)g=ag$ as $bga=a$. \ Furthermore, $%
ax=agag=ag$ as $aga=a$. \ Thus $ax=bx$. \ Now $bg(b-a)=bgb-bga=(b-a)$ and $%
(b-a)g(b-a)=bg(b-a)-ag(b-a)=(b-a)-ag(b-a)$. \ Hence $%
ag(b-a)=(b-a)-(b-a)g(b-a)\in aR\cap (b-a)R=(0)$. \ Thus $(b-a)=(b-a)g(b-a)$
and $ag(b-a)=0$. \ It follows that $agb=aga=a$. \ Now $xb=(gag)b=g(agb)=ga$
and $xa=gaga=ga$. \ Therefore $xb=xa$. \ Thus $ax=bx$ and $xa=xb$ for some $%
x\in \{a^{(1)}\}$ and it follows that $a\leq ^{-}b$.

$(2)\Longrightarrow (3):$ This is well-known. We prove it here for completeness.
As $a\leq ^{-}b$, there exists some $x\in \{a^{(1)}\}$ such that $ax=bx$ and 
$xa=xb$. \ It follows that $a=axa=bxa=axb$ and for any $y\in \{b^{(1)}\}$, $%
aya=(axb)y(bxa)=ax(byb)xa=axbxa=(axb)xa=axa=a$. \ Thus $\{b^{(1)}\}\subseteq
\{a^{(1)}\}$.

$(3)\Longrightarrow (1):$ \ Given that $\{b^{(1)}\}\subseteq \{a^{(1)}\}$, $%
ab^{(1)}a=a$ for any $b^{(1)}\in \{b^{(1)}\}$. \ By Lemma \ref{p2}, $%
\{b^{(1)}\}=g+(1-gb)R+R(1-bg)$ for $g\in \{b^{(1)}\}$. \ For each $x\in
\{b^{(1)}\}$ there exists some $r_{1},r_{2}\in R$ such that $%
x=g+(1-gb)r_{1}+r_{2}(1-bg)$. \ Multiplying on the left and right by $a$
yields $axa=a\left[ g+(1-gb)r_{1}+r_{2}(1-bg)\right] a$. \ Hence $a=axa=a%
\left[ g+(1-gb)r_{1}+r_{2}(1-bg)\right]
a=aga+a(1-gb)r_{1}a+ar_{2}(1-bg)a=a+a(1-gb)r_{1}a+ar_{2}(1-bg)a$. \ Thus $%
a(1-gb)r_{1}a+ar_{2}(1-bg)a=0.$ \ As $a(1-gb)r_{1}a+ar_{2}(1-bg)a=0$ holds
for all $r_{1}$ and $r_{2}$, we can take, in particular, $r_{2}=0$ which
gives $a(1-gb)r_{1}a=0$ for all $r_{1}$ and hence $a(1-gb)Ra=(0)$.
Similarly, by taking $r_{1}=0$, we conclude $aR(1-bg)a=(0)$. 
Now $\left( a(1-gb)R\right) ^{2}=\left( a(1-gb)R\right) \left(
a(1-gb)R\right) =\left( a(1-gb)Ra\right) \left( (1-gb)R\right) =(0)\left(
(1-gb)R\right) =(0)$. \ Similarly $\left( R(1-bg)a\right) ^{2}=(0)$. \ Since 
$R$ is a regular ring, it has no nonzero nilpotent left or right ideal. \
Thus, $a(1-gb)R=(0)$ and $R(1-bg)a=(0)$. \ As $1\in R$, $a(1-gb)=0$ and $%
(1-bg)a=0$. \ Therefore, $bga=a=agb$.\ Now for any $t_{1},t_{2}\in R$, $%
at_{1}=(bga)t_{1}=b(gat_{1})\in bR$ and $(b-a)t_{2}=bt_{2}-at_{2}=bt_{2}-%
\left( bga\right) t_{2}=b(t_{2}-gat_{2})\in bR$. \ Hence, $%
aR+(b-a)R\subseteq bR$ . \ Thus $aR+(b-a)R=bR$. \ Now we want to show that $%
aR\cap (b-a)R=(0)$. \ For some $u,v\in R$, suppose $au=(b-a)v\in aR\cap
(b-a)R$. \ Then \ $au=agau=ag(b-a)v=agbv-agav=av-av=0$ as $a=agb$. \ Thus $%
aR\cap (b-a)R=(0)$ and so $bR=aR\oplus (b-a)R$. \ Hence, $a\leq ^{\oplus }b$
as required.
\end{proof}

\ We also note that proving directly $(2)\implies (1)$ requires a brief argument.

\bigskip

\bigskip

The Corollary that follows shows, in particular, that the minus partial
order defined on the set of idempotents is the same as the partial order
defined by Kaplansky on idempotents (See e.g. Lam \cite{L1}, page 323).

\begin{corollary}
\label{p4}Let $R$ be a regular ring and $a,b\in R$ such that $b=b^{2}$. \
Then the following are equivalent:

\begin{enumerate}
\item  $a\leq ^{-}b;$

\item  $a=a^{2}=ab=ba.$
\end{enumerate}
\end{corollary}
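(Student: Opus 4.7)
The plan is to rely on Lemma~\ref{p3}, which equates $a \leq^- b$ with the inclusion $\{b^{(1)}\} \subseteq \{a^{(1)}\}$, together with the key observation that an idempotent is its own von Neumann inverse: $b \cdot b \cdot b = b^2 \cdot b = b \cdot b = b$, so $b \in \{b^{(1)}\}$, and similarly for $a$ once we know $a^2 = a$.

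For $(2) \Longrightarrow (1)$, I would simply exhibit $x = a$ as the required von Neumann inverse witnessing the minus partial order. Since $a^2 = a$ gives $axa = a^3 = a$, we have $x \in \{a^{(1)}\}$; and the hypotheses $ab = a = ba$ yield $ax = a^2 = ab = bx$ and $xa = a^2 = ba = xb$. This establishes $a \leq^- b$ from the definition in a single line.

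For $(1) \Longrightarrow (2)$, the argument goes in two steps. First, since $b = b^2$ puts $b$ in $\{b^{(1)}\}$, Lemma~\ref{p3} forces $b \in \{a^{(1)}\}$, i.e.\ $aba = a$. Second, unpacking $a \leq^- b$ gives some $x$ with $axa = a$, $ax = bx$, and $xa = xb$, whence $a = axa = bxa \in bR$ and $a = axa = axb \in Rb$. Writing $a = br$ and $a = sb$ and invoking $b^2 = b$ collapses these to $ba = b^2 r = br = a$ and $ab = sb^2 = sb = a$. Finally $a^2 = a \cdot a = a(ba) = aba = a$, so all four identities in $(2)$ are in place.

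There is no real obstacle; the proof is essentially a translation of Lemma~\ref{p3} into the idempotent setting. The one spot that demands attention is the dual use of $b = b^2$ to collapse $br$ and $sb$ back to $a$, but this is immediate once both containments $a \in bR$ and $a \in Rb$ are extracted from the identity $a = axa$.
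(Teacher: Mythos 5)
Your proof is correct, and since the paper dismisses this corollary with ``The proof is straightforward,'' you are simply supplying the details the authors omit: the choice $x=a$ as the witnessing inner inverse for $(2)\Rightarrow(1)$, and for $(1)\Rightarrow(2)$ the observation that $b\in\{b^{(1)}\}\subseteq\{a^{(1)}\}$ gives $aba=a$, combined with $a=bxa\in bR$ and $a=axb\in Rb$ to collapse $ba$ and $ab$ to $a$. This is the natural (and surely intended) argument; no discrepancy with the paper.
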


\begin{proof}
The proof is straightforward.
\end{proof}

\begin{corollary}
\label{p9}Let $R$ be a regular ring and let $a,b,c\in R$ with $b=a+c$. \
Then the following statements are equivalent:

\begin{enumerate}
\item  $a\leq ^{-}b$;

\item  $aR\cap cR=(0)=Ra\cap Rc$.
\end{enumerate}
\end{corollary}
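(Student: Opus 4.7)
The plan is to recognize the corollary as an immediate consequence of the preceding results: Lemma~\ref{p3} identifies $a\leq^{-}b$ with $a\leq^{\oplus}b$, and Theorem~\ref{p1} converts the direct sum statement into the intersection condition. Substituting $c=b-a$ makes the two formulations line up exactly.

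First I would invoke Lemma~\ref{p3} to rewrite $(1)$ in direct sum form: $a\leq^{-}b$ is equivalent to $a\leq^{\oplus}b$, which by definition means $bR=aR\oplus(b-a)R$. Writing $c=b-a$, this is just $bR=aR\oplus cR$, i.e.\ condition~$(1)$ of Theorem~\ref{p1} applied to the pair $(a,c)$.

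Next I would apply Theorem~\ref{p1} to close the loop. The hypothesis of that theorem requires $a+c$ to be regular, which holds here because $a+c=b$ and $R$ is a regular ring. The equivalence of conditions $(1)$ and $(3)$ in Theorem~\ref{p1} then gives $aR\oplus cR=(a+c)R\ \Longleftrightarrow\ aR\cap cR=(0)=Ra\cap Rc$. Chaining the two equivalences yields
\[
a\leq^{-}b\ \Longleftrightarrow\ bR=aR\oplus cR\ \Longleftrightarrow\ aR\cap cR=(0)=Ra\cap Rc,
\]
which is exactly the desired statement.

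I do not anticipate any real obstacle; the corollary is essentially a repackaging of Lemma~\ref{p3} and Theorem~\ref{p1}. The only small point to take care of is invoking the regularity of $b=a+c$ before citing Theorem~\ref{p1}, which is immediate from the blanket hypothesis that $R$ is a regular ring.
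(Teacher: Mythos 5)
Your proposal is correct and follows essentially the same route as the paper: Lemma~\ref{p3} converts $a\leq^{-}b$ into $bR=aR\oplus cR$, and Theorem~\ref{p1} (applicable since $b=a+c$ is regular) converts that into the intersection conditions. The paper's own proof is just a terser statement of this same chain of equivalences.
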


\begin{proof}
It follows from Lemma 3 and observing that, in a regular ring, $a\leq ^{-}a+c
$ if and only if$\ a\leq ^{\oplus }a+c$ if and only if $(a+c)R=aR\oplus cR$. 
\end{proof}

\bigskip

Hartwig (\cite{H1}, Pages 12-13) posed the following questions, among others:

\bigskip

(1) If $R$ is a regular ring and $aR\cap cR=(0)=Ra\cap Rc$, does there exist 
$a^{(1)}$ such that $a^{(1)}c=0=ca^{(1)}$?\ 

\bigskip

(2) \ Does $a\leq ^{-}c$, $b\leq ^{-}c$, $aR\cap cR=(0)=Ra\cap Rc$ imply $%
a+b\leq ^{-}c$?

\bigskip

\ As a byproduct of the development of the direct sum partial order, we give
an application that answers the above two questions of Hartwig. We do not
know whether or not someone has answered these questions, as we could not
find this in the literature. \ In any case, we believe that the answers we
have given would be of interest to the reader. \ Below, we answer Question 1
in the affirmative and Question 2 in the negative by providing a
counterexample. \ 

\begin{proposition}
\label{p10}$($Hartwig Question $1$$)$ If $R$ is a regular ring and $aR\cap
cR=(0)=Ra\cap Rc$, for some nonzero elements $a,c\in R$, then there exists a
nonzero $a^{(1)}$ such that $a^{(1)}c=0=ca^{(1)}$.
\end{proposition}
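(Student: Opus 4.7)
The plan is to exploit the regularity of $R$ at $a+c$. Since $R$ is regular, pick $g \in R$ with $(a+c)g(a+c) = a+c$; expanding gives
\[
aga + agc + cga + cgc = a + c.
\]
The heart of the argument is to show that the direct-sum hypotheses force the four block-diagonal relations $aga = a$, $cgc = c$, $agc = 0$, and $cga = 0$.

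To obtain these, I would first rewrite $(a+c)g(a+c) = a+c$ as $(a+c)ga - a = c - (a+c)gc$. The left side lies in $Ra$ and the right in $Rc$, so $Ra \cap Rc = (0)$ forces both to vanish, giving $(a+c)ga = a$ and $(a+c)gc = c$. From the first of these, $a - aga = cga$, where the left side lies in $aR$ and the right in $cR$; now $aR \cap cR = (0)$ yields $aga = a$ and $cga = 0$. The remaining identities $cgc = c$ and $agc = 0$ follow by the symmetric argument starting from $(a+c)gc = c$. (Alternatively, one can invoke Corollary~\ref{p9} to get $a \leq^- a+c$ and $c \leq^- a+c$, then apply Lemma~\ref{p3}\,(3) on both sides; the computation above is essentially the Jain--Prasad direct-sum argument of Theorem~\ref{p1} made explicit.)

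With the four relations in hand, define $h = (1 - gc)\,g\,(1 - cg)$. A direct computation using $agc = 0 = cga$ collapses $aha$ to $aga = a$, so $h \in \{a^{(1)}\}$; the relation $cgc = c$ gives $c(1-gc) = 0 = (1-cg)c$, so $ch = 0 = hc$. Since $aha = a \neq 0$, the element $h$ is automatically nonzero, and thus $h$ is the required von Neumann inverse $a^{(1)}$ with $ca^{(1)} = 0 = a^{(1)}c$.

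There is no genuine obstacle in this argument; the only subtlety is guessing the right two-sided modification of $g$. Multiplying by $1 - gc$ on the left and $1 - cg$ on the right is precisely the symmetric adjustment needed to annihilate $c$ from both sides without disturbing the von Neumann inverse property for $a$.
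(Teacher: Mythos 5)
Your argument is correct in every step: the two splittings forced by $Ra\cap Rc=(0)$ and $aR\cap cR=(0)$ do give $aga=a$, $cgc=c$, $agc=0=cga$, and the element $h=(1-gc)g(1-cg)$ then satisfies $aha=aga=a\neq 0$ and $ch=0=hc$. The route is genuinely more self-contained than the paper's, though. The paper sets $b=a+c$, invokes Corollary \ref{p9} to get $a\leq^{-}b$, and then reads the conclusion directly off the definition of the minus order: the witness $x=a^{(1)}$ in that definition satisfies $ax=bx$ and $xa=xb$, and substituting $b=a+c$ cancels to $cx=0=xc$ at once. That proof is two lines long but leans on the chain Corollary \ref{p9} $\leftarrow$ Lemma \ref{p3} $\leftarrow$ Theorem \ref{p1}, where the real work (and an explicit witness, namely $gag$ for $g\in\{(a+c)^{(1)}\}$) is buried in the proof of $(1)\Rightarrow(2)$ of Lemma \ref{p3}. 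You have unpacked that work and produced a different, symmetric explicit inverse $(1-gc)g(1-cg)$. What your version buys is independence from the partial-order machinery — it answers Hartwig's question directly from regularity of $a+c$ and the intersection hypotheses — at the cost of redoing a computation the paper has already recorded elsewhere; the paper's version buys brevity at the cost of hiding the explicit formula for the inverse.
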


\begin{proof}
Let $b=a+c$.\ \ By Corollary 5, $a\leq ^{-}b$.\ \ Then, by the definition of
\ the minus partial order, for some $a^{(1)}$, $aa^{(1)}=ba^{(1)}$ and $%
a^{(1)}a=a^{(1)}b$.\ \ Now substituting $b=a+c$ yields $aa^{(1)}=(a+c)a^{(1)}
$ and $a^{(1)}a=a^{(1)}(a+c)$.\ \ Thus $aa^{(1)}=aa^{(1)}+ca^{(1)}$ and $%
a^{(1)}a=a^{(1)}a+a^{(1)}c$.\ \ It follows that $ca^{(1)}=0=a^{(1)}c$ as
required.
\end{proof}

\begin{example}
\label{p11}$($Hartwig Question $2$$)$

Using matrix units $e_{ij}$, let $a=e_{13}$, $b=e_{24}$, and $%
c=e_{13}+e_{14}+e_{24} $. \ Clearly $a\leq ^{-}c$ and $b\leq ^{-}c.$ \ It is
obvious that $aR\cap bR=(0)=Ra\cap Rb$. \ Since $rank(c)-rank(a+b)=2-2=0$
and $rank(c-(a+b))=1$, it follows that $a+b\nleq ^{-}c$. \ 
\end{example}

\bigskip

\section{Main Results}

Let $R$ be a regular ring and $S$ be a subset of $R$. We define a maximal
element in $C=\{x\in S:x\leq ^{\oplus }a\}$ as an element $b\neq a$ such
that $b$ $\leq ^{\oplus }a$ and if $b$ $\leq ^{\oplus }c\leq ^{\oplus }a$
then $c=b$ or $c=a$.

For fixed elements $a,b,c\in R,$ we give a complete description of the
maximal elements in the subring $S=eRf$, where $e$ and $f$ are idempotents
given by $eR=aR\cap cR$ and $Rf=Ra\cap Rb$. \ Here, $C=\{s\in eRf:s\leq
^{\oplus }a\}$. \ In the literature, maximal elements in $C$ have been
called shorted operators of $a$ (\cite{A1}, \cite{A2} and \cite{M3}).

We begin with a result that is used frequently in the sequel. This is indeed
contained in (\cite{M6}, Lemma $1$) where the author proves the equivalence of
$11$ statements. \ However, for the sake of completeness, we provide a direct
argument.

\begin{lemma}
\label{p12}Suppose $R$ is a regular ring and $a,b\in R$ such that $%
\{a^{(1)}\}\cap \{b^{(1)}\}\neq \emptyset $. \ Then the following are
equivalent:

\begin{enumerate}
\item  $aR\subset bR$ and $Ra\subset Rb;$

\item  $a\leq ^{\oplus }b.$
\end{enumerate}
\end{lemma}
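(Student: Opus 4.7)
The plan is to prove the two directions separately, with the harder one being $(1) \Longrightarrow (2)$, and to reduce this to Lemma \ref{p3} by verifying the inner-inverse containment condition.

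For the easy direction $(2) \Longrightarrow (1)$: if $a \leq^{\oplus} b$, then by definition $bR = aR \oplus (b-a)R$, which immediately gives $aR \subseteq bR$. By Theorem \ref{p1}, $Rb = Ra \oplus R(b-a)$ as well, so $Ra \subseteq Rb$. (The proper containment would be strict since $a \neq b$; the lemma is stated with ``$\subset$'' but used as ``$\subseteq$''.)

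For $(1) \Longrightarrow (2)$, the plan is to leverage the hypothesis that there is a common inner inverse $g \in \{a^{(1)}\} \cap \{b^{(1)}\}$, so that $aga = a$ and $bgb = b$. Since $aR \subseteq bR$, write $a = br_{1}$ for some $r_{1} \in R$; then
\[
a = br_{1} = (bgb)r_{1} = bg(br_{1}) = bga.
\]
Symmetrically, from $Ra \subseteq Rb$ write $a = r_{2}b$ and obtain $a = agb$. These two identities $a = bga = agb$ are the key computational identities.

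With these in hand, the idea is to show $\{b^{(1)}\} \subseteq \{a^{(1)}\}$, which by Lemma \ref{p3} is equivalent to $a \leq^{\oplus} b$. For any $x \in \{b^{(1)}\}$, so that $bxb = b$, I compute
\[
axa = (agb)x(bga) = ag(bxb)ga = ag \cdot b \cdot ga = (agb)(ga) = a(ga) = aga = a,
\]
so $x \in \{a^{(1)}\}$. Invoking Lemma \ref{p3}, $a \leq^{\oplus} b$, finishing the proof.

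The main (and only) obstacle is spotting that the common inner inverse, together with the two one-sided containments, forces the two-sided equation $a = bga = agb$; once this is in place, every subsequent step is a one-line substitution. No additional machinery beyond Lemma \ref{p3} is needed, and the hypothesis $\{a^{(1)}\} \cap \{b^{(1)}\} \neq \emptyset$ is used exactly in producing this identity.
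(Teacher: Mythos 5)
Your proof is correct and follows essentially the same route as the paper: both use the common inner inverse $g$ to establish $\{b^{(1)}\}\subseteq\{a^{(1)}\}$ and then invoke Lemma~\ref{p3}. The only cosmetic difference is that the paper phrases the key step as the invariance of $ab^{(1)}a$ over all choices of $b^{(1)}$ (writing $a=rb=bs$ so $ab^{(1)}a=rbs$), whereas you pin down the identities $a=bga=agb$ first and substitute directly; the computations are the same in substance.
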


\begin{proof}
Suppose $aR\subset bR$ and $Ra\subset Rb$. \ It follows that $a=rb=bs$ for
some $r,s\in R$. \ We claim that $ab^{(1)}a$ is invariant under any choice
of $b^{(1)}$. \ Let $\ x,y\in \{b^{(1)}\}$ be arbitrary. \ Now $%
axa=(rb)x(bs)=r(bxb)s=rbs$ as $bxb=b$. \ Similarly, $%
aya=(rb)y(bs)=r(byb)s=rbs$ as $byb=b$. \ Thus $axa=aya$ for every $x,y\in
\{b^{(1)}\}$. \ Hence $ab^{(1)}a$ is invariant under any choice of $b^{(1)}$%
. Since we have assumed that $\{a^{(1)}\}\cap \{b^{(1)}\}\neq \emptyset $, there
exists some $g\in \{a^{(1)}\}\cap \{b^{(1)}\}$. \ Therefore $ab^{(1)}a=aga=a$
for all $b^{(1)}$. \ Hence $\{b^{(1)}\}\subseteq \{a^{(1)}\}$ and by Lemma 
\ref{p3}, $a\leq ^{\oplus }b$.

Conversely, if $a\leq ^{\oplus }b$, then $aR\subset bR$ and $Ra\subset Rb$
follow by definition.
\end{proof}

We now demonstrate an important relationship between weak von Neumann
inverses and strong von Neumann inverses under the direct sum partial order.

\begin{lemma}
\label{p13}Let $a\in R$ where $R$ is a regular ring. \ Then the following
are equivalent:

\begin{enumerate}
\item  $b$ is a weak von Neumann inverse of $a;$

\item  There exists a strong von Neumann inverse $c$ of $a$ such that $b\leq
^{\oplus }c.$
\end{enumerate}
\end{lemma}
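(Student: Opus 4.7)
The plan is to handle the two directions of the equivalence separately. For $(2) \Rightarrow (1)$ the argument is short: if $c$ is a strong von Neumann inverse of $a$, then $cac = c$ exhibits $a$ as an element of $\{c^{(1)}\}$, so applying Lemma \ref{p3} to $b \leq^{\oplus} c$ gives $a \in \{b^{(1)}\}$, i.e., $bab = b$, so $b$ is a weak von Neumann inverse of $a$.

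For $(1) \Rightarrow (2)$, assume $bab = b$ and first record the key structural facts that $p := ab$ and $q := ba$ are idempotents, since $(ab)^2 = a(bab) = ab$ and $(ba)^2 = (bab)a = ba$; consequently $qb = b = bp$ and $b(1-p) = 0 = (1-q)b$, and a short calculation gives $(1-p)a(1-q) = a - aba$. Using regularity of $R$, I would pick a strong von Neumann inverse $c_1$ of $a - aba$ (which exists by Lemma \ref{p2}) and define
\[
c := b + (1-q)c_{1}(1-p), \qquad w := c - b.
\]
The verifications $aca = a$ and $cac = c$ should then be routine: in $cac = bab + baw + wab + waw$ the two cross terms vanish because $ba(1-q) = 0$ and $(1-p)ab = 0$ by idempotency of $p$ and $q$; the ``off-corner'' product $(1-p)a(1-q) = a - aba$ feeds into the defining identities $(a-aba)c_{1}(a-aba) = a-aba$ and $c_{1}(a-aba)c_{1} = c_{1}$, and everything collapses to $a$ and $c$ respectively.

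The main obstacle I anticipate is the third requirement, $b \leq^{\oplus} c$. The naive attempt to use $x = a$ as a witness for the minus-partial-order condition $bx = cx$, $xb = xc$ fails, because $a(c-b)$ need not vanish. Instead, I would invoke Lemma \ref{p3} and directly establish $\{c^{(1)}\} \subseteq \{b^{(1)}\}$: given any $y$ with $cyc = c$, expand $(b+w)y(b+w) = b + w$ into the four summands $byb + byw + wyb + wyw$, and then exploit the Peirce-type identities $qb = b$, $bp = b$, $qw = 0 = wp$, $(1-q)w = w = w(1-p)$. Multiplying the relation on the left by $q$ and on the right by $1-p$ isolates $byw = 0$, and the symmetric sandwich with $1-q$ and $p$ forces $wyb = 0$; a final two-sided multiplication by $q$ and $p$ then collapses the residual equation to $byb = b$, so $y \in \{b^{(1)}\}$ as needed.
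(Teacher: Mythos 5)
Your proposal is correct, and while the reverse direction $(2)\Rightarrow(1)$ coincides with the paper's argument (pass from $cac=c$ to $a\in\{c^{(1)}\}$ and apply Lemma \ref{p3}), your forward direction is genuinely different in two respects. First, the correction term: the paper fixes an arbitrary $a^{(1)}$ and sets $u=a^{(1)}(a-aba)a^{(1)}$, whereas you take a strong von Neumann inverse $c_{1}$ of $a-aba$ and set $w=(1-ba)c_{1}(1-ab)$. Both corrections live in the same Peirce-type corner --- one checks that the paper's $u$ also satisfies $ba\cdot u=0=u\cdot ab$ --- which is the real reason the cross terms die in either computation; your version makes that structural point explicit, at the cost of invoking a $(1,2)$-inverse of $a-aba$ rather than just a $(1)$-inverse of $a$ (harmless in a regular ring). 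Second, and more substantively, the verification of $b\leq^{\oplus}c$: the paper proves $cR=bR\oplus uR$ directly, using $cab=b$ to get $b\in cR$ and left-multiplication by $ba$ to kill $bR\cap uR$; you instead establish the inclusion $\{c^{(1)}\}\subseteq\{b^{(1)}\}$ by sandwiching the identity $cyc=c$ between $q=ba$, $1-p=1-ab$ and their complements, then invoke Lemma \ref{p3}. I have checked that the steps you defer as ``routine'' (the vanishing of $baw$, $wab$, the collapse $waw=w$, and the three sandwich identities $byw=0$, $wyb=0$, $byb=b$) all go through. Your route is more uniform in that it reuses the inner-inverse characterization of $\leq^{\oplus}$ already proved, and it yields the slightly stronger intermediate fact $\{c^{(1)}\}\subseteq\{b^{(1)}\}$ for free; the paper's route is more elementary at that step and exhibits the direct sum decomposition explicitly. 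Your remark that the naive witness $x=a$ for the minus order fails is also correct and worth keeping.
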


\begin{proof}
Suppose $b$ is a weak von Neumann inverse of $a$. \ For any
fixed $a^{(1)}$, define $u=a^{(1)}(a-aba)a^{(1)}$ and $c=b+u$. \ Then $%
aca=aba+aua=aba+aa^{(1)}aa^{(1)}a-aa^{(1)}abaa^{(1)}a=aba+a-aba=a$ and $%
cac=(b+u)a(b+u)=bab+bau+uab+uau=b+ba(a^{(1)}aa^{(1)}-a^{(1)}abaa^{(1)})+(a^{(1)}aa^{(1)}-a^{(1)}abaa^{(1)})ab+
$

$%
(a^{(1)}aa^{(1)}-a^{(1)}abaa^{(1)})a(a^{(1)}aa^{(1)}-a^{(1)}abaa^{(1)})=b+baa^{(1)}-baa^{(1)}+a^{(1)}ab-a^{(1)}ab+a^{(1)}aa^{(1)}-a^{(1)}abaa^{(1)}-a^{(1)}abaa^{(1)}+a^{(1)}abaa^{(1)}=b+a^{(1)}(a-aba)a^{(1)}=b+u=c
$. \ This shows that $c$ is a strong von Neumann inverse of $a$. \ 

Now we want to show that $b\leq ^{\oplus }c$. \ In other words, we will
prove that $bR\oplus uR=cR$. \ Observe that $%
cab=[b+a^{(1)}(a-aba)a^{(1)}]ab=bab+a^{(1)}(ab-abab)=bab=b$. \ Therefore $b\in
cR$. \ As $c=b+u$, it is clear that $cR\subseteq bR+uR$. \ As $u=c-b$ and $%
b\in cR$, $uR\subseteq $ $cR$. \ It follows that $cR=bR+uR$. \ Now we want
to show that $bR\cap uR=(0)$. \ Let $bp=uq\in bR\cap uR$ for some $p,q\in R$%
. \ Multiplying $ba$ on both sides yields $%
bp=babp=bauq=ba[a^{(1)}(a-aba)a^{(1)}]q=(ba-baba)a^{(1)}q=(ba-ba)a^{(1)}q=0$%
. \ Therefore $bR\cap uR=0$. \ Thus $bR\oplus uR=cR$ and we have
demonstrated that $b\leq ^{\oplus }c$.

Conversely, suppose that there exists a strong von Neumann inverse $c$ of $a$
such that $b\leq ^{\oplus }c$. As $c$ is a weak von Neumann inverse of $a$, $%
cac=c$ and thus $a\in \{c^{(1)}\}$. \ By assumption $b\leq ^{\oplus }c$ and
it follows from Lemma \ref{p3} that $\{c^{(1)}\}\subseteq \{b^{(1)}\}$. \
Thus $a\in \{c^{(1)}\}\subseteq \{b^{(1)}\}$ and it follows that $bab=b$. \
Hence $b$ is a weak von Neumann inverse of $a$.
\end{proof}

\begin{lemma}
\label{p14}Suppose $R$ is a regular ring. \ Let $y$ be a weak von Neumann
inverse and $z$ be a strong von Neumann inverse of an element $\alpha $ in
the subring $fRe$\ such that $y\leq ^{\oplus }z$. \ Then $eyf\leq ^{\oplus
}ezf$.

\begin{proof}
Let $\alpha =fxe\in fRe$. \ Since $y\leq ^{\oplus }z$, $yR\subseteq zR$ and $%
Ry\subseteq Rz$. \ Thus, $y=rz=zs$ for some $r,s\in R$. \ It is
straightforward to verify that $z\alpha y=y=y\alpha z$. \ This gives $%
(ezf)x(eyf)=(ezf)x(e(zs)f)=ez(fxe)zsf=ezsf=eyf$. \ Similarly $(eyf)x(ezf)=eyf
$. \ Thus $(eyf)R\subseteq (ezf)R$ and $R(eyf)\subseteq R(ezf)$. \ As $%
\alpha =fxe$ is a common von Neumann inverse of $y$ and $z$, it follows that 
$(eyf)x(eyf)=eyf$ and $(ezf)x(ezf)=ezf$ and so $x$ is a common von Neumann
inverse of $eyf$ and $ezf$. \ By Lemma \ref{p12}, $eyf\leq ^{\oplus }ezf$ .
\end{proof}
\end{lemma}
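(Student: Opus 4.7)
The plan is to invoke Lemma \ref{p12}: we exhibit a common von Neumann inverse of $eyf$ and $ezf$ and establish the inclusions $(eyf)R\subseteq (ezf)R$ and $R(eyf)\subseteq R(ezf)$. Since $\alpha\in fRe$, write $\alpha=fxe$ for some $x\in R$ (this is exactly the $x$ named in the statement), and I will show that this $x$ serves as the common von Neumann inverse.

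First I would extract two useful identities from the hypothesis $y\leq^{\oplus}z$. By Lemma \ref{p12} (or directly from the definition of $\leq^{\oplus}$), this gives $yR\subseteq zR$ and $Ry\subseteq Rz$, so there exist $r,s\in R$ with $y=rz=zs$. Combining with $z\alpha z=z$ (since $z$ is a strong inverse of $\alpha$) yields
\[
z\alpha y = z\alpha z s = zs = y \quad\text{and}\quad y\alpha z = r z\alpha z = rz = y.
\]
These are the key identities; once they are in hand, the rest is a direct calculation inside $eRf$.

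Next, using $\alpha=fxe$, I compute
\[
(ezf)\,x\,(eyf) = ez(fxe)yf = e(z\alpha y)f = eyf,
\]
and similarly $(eyf)\,x\,(ezf) = e(y\alpha z)f = eyf$. This shows simultaneously that $eyf\in (ezf)R\cap R(ezf)$, giving the required inclusions $(eyf)R\subseteq (ezf)R$ and $R(eyf)\subseteq R(ezf)$. Next I verify that $x$ is a common von Neumann inverse of $eyf$ and $ezf$: from $y\alpha y=y$ (since $y$ is a weak inverse, hence $\alpha\in\{y^{(1)}\}$ by Lemma \ref{p13}, or directly since $yay=y$ in the sense relevant here) one gets
\[
(eyf)\,x\,(eyf) = ey(fxe)yf = e(y\alpha y)f = eyf,
\]
and from $z\alpha z=z$ one gets $(ezf)\,x\,(ezf) = ezf$ by the same manipulation.

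With these ingredients, $\{(eyf)^{(1)}\}\cap\{(ezf)^{(1)}\}$ is nonempty (it contains $x$) and both module inclusions hold, so Lemma \ref{p12} yields $eyf\leq^{\oplus}ezf$. I do not anticipate any serious obstacle; the only nontrivial step is recognizing that the identities $z\alpha y = y = y\alpha z$ follow from $y\leq^{\oplus}z$ together with $z\alpha z = z$, and that the sandwich factor $fxe=\alpha$ is exactly what is needed to collapse the middle of the products $(eyf)x(eyf)$, $(ezf)x(ezf)$, $(ezf)x(eyf)$, and $(eyf)x(ezf)$.
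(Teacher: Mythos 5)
Your proposal is correct and follows essentially the same route as the paper's own proof: write $\alpha=fxe$, derive $z\alpha y=y=y\alpha z$ from $y=rz=zs$ and $z\alpha z=z$, use these to get the inclusions $(eyf)R\subseteq (ezf)R$ and $R(eyf)\subseteq R(ezf)$ together with the fact that $x$ is a common von Neumann inverse, and then apply Lemma \ref{p12}. The only difference is that you spell out the verification of $z\alpha y=y=y\alpha z$, which the paper leaves as ``straightforward.''
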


\ Next, we give two key lemmas. We will assume throughout that $a\not\in S$.

\begin{lemma}
\label{p15}Let $R$ be a regular ring. \ Then $d\in C$ is a maximal element
in $C$ \ if and only if for any $d^{^{\prime }}\leq ^{\oplus }a$ such that $%
dR\subseteq d^{^{\prime }}R\subseteq eR$, $Rd\subseteq Rd^{^{\prime
}}\subseteq Rf$, we have $d=d^{^{\prime }}$.
\end{lemma}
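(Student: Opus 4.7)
The plan is to prove the two directions of the equivalence separately, relying on Lemma \ref{p3} (the equivalence of $\leq^{\oplus}$ with reverse containment of von Neumann inverses) and Lemma \ref{p12} (the ideal-inclusion criterion for $\leq^{\oplus}$ given a common von Neumann inverse).

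For the forward direction, I assume $d$ is maximal in $C$ and let $d'$ satisfy the three listed hypotheses. The inclusions $d'R \subseteq eR$ and $Rd' \subseteq Rf$, together with $e^{2}=e$ and $f^{2}=f$, yield $ed'=d'=d'f$, so $d'=ed'f \in eRf = S$. Because $d, d' \leq^{\oplus} a$, Lemma \ref{p3} gives $\{a^{(1)}\} \subseteq \{d^{(1)}\} \cap \{d'^{(1)}\}$, so any $a^{(1)}$ is a common von Neumann inverse for $d$ and $d'$. Combined with $dR \subseteq d'R$ and $Rd \subseteq Rd'$, Lemma \ref{p12} produces $d \leq^{\oplus} d'$. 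Chaining with $d' \leq^{\oplus} a$ and appealing to the maximality of $d$ forces $d' \in \{d,a\}$, and since $d' \in S$ but $a \notin S$ by the standing assumption, I conclude $d' = d$.

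For the reverse direction, I assume the uniqueness hypothesis and show $d$ is maximal. Suppose $d \leq^{\oplus} y \leq^{\oplus} a$; I need $y=d$ or $y=a$. When $y \in S$, the choice $d' := y$ fits the hypothesis ($y \in eRf$ gives $yR \subseteq eR$ and $Ry \subseteq Rf$, while $d \leq^{\oplus} y$ gives $dR \subseteq yR$ and $Rd \subseteq Ry$), so the hypothesis forces $y = d$. When $y \notin S$, the plan is to project by setting $d' := eyf \in eRf$, verify the hypothesis for this $d'$, deduce $d = eyf$, and then combine $e(y-d)f = 0$ with the direct sum decomposition $aR = dR \oplus (y-d)R \oplus (a-y)R$ arising from $d \leq^{\oplus} y \leq^{\oplus} a$ and Theorem \ref{p1}, together with the definitions of $e$ and $f$, to force $y = a$.

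The main obstacle is the projection step in the nontrivial subcase: verifying that $d' = eyf$ inherits $d' \leq^{\oplus} a$ along with $dR \subseteq d'R$ and $Rd \subseteq Rd'$. This will require choosing a common von Neumann inverse of $a$, $y$, and $d$ via Lemma \ref{p3}, multiplying the direct sum decompositions by $e$ on the left and $f$ on the right, and reassembling through Lemma \ref{p12}, throughout using $d = edf$ and the idempotence of $e$ and $f$ to confirm that $eyf$ genuinely sits between $d$ and $a$ under $\leq^{\oplus}$.
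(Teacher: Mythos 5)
Your forward direction is exactly the paper's argument: show $d'=ed'f\in eRf$, get a common von Neumann inverse of $d$ and $d'$ from $\{a^{(1)}\}\subseteq\{d^{(1)}\}\cap\{(d')^{(1)}\}$ via Lemma \ref{p3}, apply Lemma \ref{p12} to get $d\leq^{\oplus}d'$, and invoke maximality. That part is correct and complete.

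The problem is in your converse, specifically the subcase $y\notin S$. The paper reads ``maximal element in $C$'' as maximality \emph{within} $C$: the comparison element $c$ in ``$b\leq^{\oplus}c\leq^{\oplus}a$ implies $c=b$ or $c=a$'' is taken from $C\cup\{a\}$, not from all of $R$. Under that reading your first subcase ($y\in S$) already finishes the converse --- which is why the paper dismisses it as obvious --- and the second subcase is vacuous. Under your literal reading, the claim you set out to prove in that subcase (that $y\notin S$ with $d\leq^{\oplus}y\leq^{\oplus}a$ forces $y=a$) is simply false, so no amount of work on the ``projection step'' will close it. Concretely, in the example following Theorem \ref{p17} one has $\mathrm{rank}(d)=2$ and $\mathrm{rank}(a)=4$ for $d=ev_{1}f$; since $d\leq^{-}a$ means $\mathrm{rank}(a-d)=\mathrm{rank}(a)-\mathrm{rank}(d)=2$, one can split $a-d=w_{1}+w_{2}$ rank-additively with $\mathrm{rank}(w_{1})=1$ and obtain $y=d+w_{1}$ of rank $3$ satisfying $d\leq^{\oplus}y\leq^{\oplus}a$, $y\neq d$, $y\neq a$; this $y$ cannot lie in $eRf$ because every element of $eRf$ has rank at most $\mathrm{rank}(f)=2$. (The same interpolation shows that under the all-of-$R$ reading $\max C$ would typically be empty and Theorem \ref{p17} false, which confirms the intended reading.) The fix is simply to delete the $y\notin S$ branch: the converse needs only the observation that any $c\in C$ with $d\leq^{\oplus}c\leq^{\oplus}a$ satisfies $cR\subseteq eR$, $Rc\subseteq Rf$, $dR\subseteq cR$, $Rd\subseteq Rc$, hence $c=d$ by hypothesis.
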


\begin{proof}
Let $d$ be a maximal element in $C$. \ If $d^{^{\prime }}$ is any element in 
$R$ such that $d^{^{\prime }}\leq ^{\oplus }a$ and $dR\subseteq d^{^{\prime
}}R\subseteq eR$, $Rd\subseteq Rd^{^{\prime }}\subseteq Rf$, then clearly $%
d^{^{\prime }}\in eRf$. \ As $d^{^{\prime }}\leq ^{\oplus }a$, $d^{^{\prime
}}\in C$. \ Then $\{a^{(1)}\}\subseteq \{d^{(1)}\}\cap \{(d^{^{\prime
}})^{(1)}\}$.\ \ Hence, $d\leq ^{\oplus }d^{^{\prime }}$ by Lemma \ref{p12}%
.\ \ Then by the maximality of $d$ in $C$, $d$ $=$ $d^{^{\prime }}$.

The converse is obvious.
\end{proof}

\begin{lemma} \label{p16}
$C=\{euf:u$ is a weak von Neumann inverse of $fa^{(1)}e\}$.
\end{lemma}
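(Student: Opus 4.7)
The plan is to prove both containments directly, applying Lemma~\ref{p3} in the form $x \leq^{\oplus} a \iff \{a^{(1)}\} \subseteq \{x^{(1)}\}$, after first verifying that the expression $fa^{(1)}e$ is independent of the choice of $a^{(1)}$. Since $eR = aR \cap cR \subseteq aR$ we have $e \in aR$, so $aa^{(1)}e = e$ for every von Neumann inverse $a^{(1)}$; symmetrically, $Rf \subseteq Ra$ yields $fa^{(1)}a = f$. Given two such inverses $a^{(1)}$ and $\tilde{a}^{(1)}$, I would then write
\[
fa^{(1)}e \;=\; fa^{(1)}(a\tilde{a}^{(1)}e) \;=\; (fa^{(1)}a)\,\tilde{a}^{(1)}\,e \;=\; f\tilde{a}^{(1)}e,
\]
so the condition ``$u$ is a weak von Neumann inverse of $fa^{(1)}e$'' is unambiguous.

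For the inclusion $\supseteq$, suppose $u(fa^{(1)}e)u = u$. The element $euf$ clearly lies in $eRf$, and for $euf \leq^{\oplus} a$ I would verify $\{a^{(1)}\} \subseteq \{(euf)^{(1)}\}$ via the one-line computation
\[
(euf)\,a^{(1)}\,(euf) \;=\; e\,u\,(fa^{(1)}e)\,u\,f \;=\; euf,
\]
in which the weak-inverse identity collapses the interior; then Lemma~\ref{p3} gives $euf \in C$.

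For the inclusion $\subseteq$, let $d \in C$. From $d \in eRf$ we get $d = edf$, and from $d \leq^{\oplus} a$ together with Lemma~\ref{p3} we get $da^{(1)}d = d$ for every $a^{(1)}$. The only point requiring a moment of thought is the choice of $u$; the convenient one is $u = d$ itself, since $euf = edf = d$ and
\[
u\,(fa^{(1)}e)\,u \;=\; d\,(fa^{(1)}e)\,d \;=\; (df)\,a^{(1)}\,(ed) \;=\; d\,a^{(1)}\,d \;=\; d \;=\; u,
\]
so $u = d$ is a weak von Neumann inverse of $fa^{(1)}e$ realizing $d = euf$.

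The main (mild) obstacle is really just the initial invariance check; once $fa^{(1)}e$ is known to be unambiguous, both inclusions reduce to short manipulations relying only on the identities $d = edf$ and $da^{(1)}d = d$ (and the dual pair $aa^{(1)}e = e$, $fa^{(1)}a = f$).
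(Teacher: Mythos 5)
Your proof is correct and follows essentially the same route as the paper's: both inclusions reduce to the identity $(euf)a^{(1)}(euf)=eu(fa^{(1)}e)uf$ together with Lemma~\ref{p3}, and for the forward inclusion you take $u=d$ itself, exactly as the paper does with $u=s=etf$. The only difference is your preliminary verification that $fa^{(1)}e$ does not depend on the choice of $a^{(1)}$ --- a point the paper omits at this stage (it appears only later, inside the proof of Theorem~\ref{p21}) but which is genuinely needed to pass from the weak-inverse identity for one fixed $a^{(1)}$ to the full containment $\{a^{(1)}\}\subseteq\{(euf)^{(1)}\}$, so its inclusion is a small improvement rather than a detour.
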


\begin{proof}
Let $s=etf\in C$ for some $t\in R$. Then $s\leq ^{\oplus }a$. \ By Lemma \ref
{p3}, $\{a^{(1)}\}\subseteq \{s^{(1)}\}$. \ Therefore, we have $%
(etf)a^{(1)}(etf)=(etf)$. In other words, $(etf)(fa^{(1)}e)(etf)=(etf)$,
proving that $s=etf$ is a weak von Neumann inverse of $fa^{(1)}e$. \ This
shows that $s=euf$ for some weak von Neumann inverse $u$\ of\ $fa^{(1)}e$.

Conversely, consider any $u\in (fa^{(1)}e)^{(2)}$ and let $x=euf$. \ We want
to show that $x\leq ^{\oplus }a$. \ Now $xa^{(1)}x=\left( euf\right)
a^{(1)}\left( euf\right) =eu\left( fa^{(1)}e\right) uf=euf=x$ as $u\in
(fa^{(1)}e)^{(2)}$. \ Hence $\{a^{(1)}\}\subseteq \{x^{(1)}\}$. \ By Lemma 
\ref{p3}, $x\leq ^{\oplus }a$ and so $x=euf\in C$. \ 
\end{proof}

\begin{theorem} \label{p17}
$\max C=\{evf:v$ is a strong von Neumann inverse of $fa^{(1)}e\}.$
\end{theorem}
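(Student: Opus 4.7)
Set $\alpha = fa^{(1)}e$; the plan is to combine Lemma \ref{p16} (which describes $C$) with Lemmas \ref{p13}--\ref{p14} (relating weak and strong inverses of $\alpha$ under $\leq^{\oplus}$), and then handle maximality by a direct-sum argument. For the inclusion $(\supseteq)$, I would first observe that if $v$ is a strong inverse of $\alpha$ then $d := evf$ is itself a strong inverse of $\alpha$: using $e^{2}=e$, $f^{2}=f$, and $f\alpha e = \alpha$, one checks $d\alpha d = ev\alpha v f = evf = d$ and $\alpha d\alpha = \alpha v\alpha = \alpha$. To prove maximality via Lemma \ref{p15}, it then suffices to show that if $d \leq^{\oplus} d'$ with $d' \in C$ then $d = d'$. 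Setting $w = d' - d$, the definition of $\leq^{\oplus}$ together with Theorem \ref{p1} gives $dR \cap wR = 0$ and $Rd \cap Rw = 0$; expanding $d'\alpha d' = d'$ using $d\alpha d = d$ produces
\[
w \;=\; d\alpha w \,+\, w\alpha d \,+\, w\alpha w .
\]
Sorting the right-hand side against the decomposition $d'R = dR \oplus wR$ forces $d\alpha w = 0$, and the symmetric argument against $Rd' = Rd \oplus Rw$ forces $w\alpha d = 0$. The second strong-inverse relation $\alpha d\alpha = \alpha$ then upgrades these to $\alpha w = \alpha(d\alpha w) = 0$ and $w\alpha = (w\alpha d)\alpha = 0$, after which re-substitution collapses the identity to $w = 0$.

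For the inclusion $(\subseteq)$, take $d \in \max C$. Lemma \ref{p16} writes $d = eu_{0} f$ for some weak inverse $u_{0}$ of $\alpha$; Lemma \ref{p13} produces a strong inverse $v$ of $\alpha$ with $u_{0} \leq^{\oplus} v$; and Lemma \ref{p14} then promotes this to $d = eu_{0} f \leq^{\oplus} evf$. Since $v$ is in particular a weak inverse, $evf \in C$ by Lemma \ref{p16}, so $d \leq^{\oplus} evf \leq^{\oplus} a$. Because $evf \in S$ while $a \notin S$, the maximality of $d$ forces $d = evf$, completing the reverse inclusion.

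The main obstacle is the three-term identity $w = d\alpha w + w\alpha d + w\alpha w$: on its own it carries no information, and the key observation is that its summands fall into complementary pieces of the direct sums $dR \oplus wR$ and $Rd \oplus Rw$, so the cross-terms $d\alpha w$ and $w\alpha d$ must vanish separately. It is precisely at this point that the \emph{strong}-inverse condition $\alpha d\alpha = \alpha$ (rather than only $d\alpha d = d$) is used to promote these to $\alpha w = w\alpha = 0$; this is why strong, not merely weak, inverses characterize the maximal elements.
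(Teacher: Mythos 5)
Your proof is correct, and the first half (that every maximal element has the form $evf$) follows the paper's route exactly: Lemma \ref{p16} to write $d=eu_0f$, Lemma \ref{p13} to lift $u_0$ to a strong inverse $v$, Lemma \ref{p14} to get $d\leq^{\oplus}evf$, and $a\notin S$ to rule out $evf=a$. Where you genuinely diverge is in proving that each $evf$ actually is maximal. The paper only shows that two elements of the candidate set are never strictly comparable: assuming $evf\leq^{\oplus}ev'f$ with $v,v'$ both strong inverses, it chases the inclusions $ev'fR=ev'fa^{(1)}eR\subseteq evfR$ (using $\alpha v\alpha=\alpha$) to force $ev'fR=evfR$ and $Rev'f=Revf$, and then deduces equality; maximality of each $evf$ against an arbitrary $d'\in C$ then has to be extracted by combining this incomparability with the first half and antisymmetry, a step the paper leaves implicit. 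You instead attack an arbitrary $d'\in C$ with $evf\leq^{\oplus}d'$ head-on: writing $d'=d+w$ and expanding $d'\alpha d'=d'$ against $d\alpha d=d$ gives $w=d\alpha w+w\alpha d+w\alpha w$, the splittings $dR\oplus wR$ and $Rd\oplus Rw$ kill the cross-terms, and $\alpha d\alpha=\alpha$ upgrades this to $w\alpha=\alpha w=0$, hence $w=w\alpha w=0$. This is a clean, self-contained argument that makes visible exactly where the strong-inverse condition (as opposed to the weak one) enters, and it handles arbitrary elements of $C$ rather than only other candidates $ev'f$; the paper's version, by contrast, stays closer to the module-inclusion formalism of Lemmas \ref{p12} and \ref{p15}. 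Both are valid; yours is arguably tighter at the one point where the paper's logic requires the reader to assemble the conclusion.
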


\begin{proof}
Suppose $x=euf\in C$ where $u=\left( fa^{(1)}e\right) ^{(2)}$. \ By Lemma 
\ref{p13}, there is a strong von Neumann inverse $v\in eRf$ of\ $fa^{(1)}e$
such that $u\leq ^{\oplus}v$ and consequently, by Lemma 10, $euf\leq ^{\oplus }evf$. Thus, we have $x\leq ^{\oplus }evf$. Next, we will show that $evf\leq ^{\oplus }a$. We have $\left( evf\right)
a^{(1)}\left( evf\right) =ev\left( fa^{(1)}e\right) vf=evf$ as $v\in
(fa^{(1)}e)^{(1, 2)}$. \ Hence $\{a^{(1)}\}\subseteq \{(evf)^{(1)}\}$. \ By Lemma 
\ref{p3}, $evf\leq ^{\oplus }a$. Thus $%
\max C\subseteq \{evf:v$ is a strong von Neumann inverse of\ $fa^{(1)}e\}$. Clearly, $\max C$ is non-empty unless $evf=a$ for each choice of $v$ but this is not possible as we have assumed $a\not\in S$.

Now suppose $evf,ev^{\prime }f\in C$ such that $v,v^{\prime }$ are strong
von Neumann inverses of\ $fa^{(1)}e$ and $evf\leq ^{\oplus }ev^{\prime }f$.
\ Therefore $ev^{\prime }fR=evfR\oplus (ev^{\prime }f-evf)R$. \ Now we want
to show that $ev^{\prime }fR=evfR$. \ As $evf,ev^{\prime }f\in C$, $evf\leq
^{\oplus }a$ and $ev^{\prime }f\leq ^{\oplus }a$. \ Thus $%
\{a^{(1)}\}\subseteq \{\left( evf\right) ^{(1)}\}$ and $\{a^{(1)}\}\subseteq
\{\left( ev^{\prime }f\right) ^{(1)}\}$. \ So let $a^{(1)}$ be a common von
Neumann inverse of $evf$ and $ev^{\prime }f$. \ By assumption $evfR\subseteq
ev^{\prime }fR$. \ As shown in Lemma \ref{p14}, $\left( ev^{\prime }f\right)
a^{(1)}\left( evf\right) =evf$ and $\left( ev^{\prime }f\right)
a^{(1)}\left( ev^{\prime }f\right) =\left( ev^{\prime }f\right) $. \ Now $%
\left( ev^{\prime }f\right) R=ev^{\prime }fa^{(1)}R=ev^{\prime
}fa^{(1)}eR=ev^{\prime }(fa^{(1)}evfa^{(1)}e)R\subseteq ev^{\prime
}fa^{(1)}evfR=evfR\subseteq ev^{\prime }fR$. \ Thus $ev^{\prime }fR=evfR$. \
Similarly we can show that $Rev^{\prime }f=Revf$. \ 

As $Rev^{\prime }f=Revf$, we claim that $ev^{\prime }f=evf$. $\ $Let $%
ev^{\prime }f=revf$ for some $r\in R$. \ Now $evf=ev^{\prime
}fa^{(1)}evf=(revf)a^{(1)}evf=r(evf)=ev^{\prime }f$. \ Thus $evf=ev^{\prime
}f$. \ Hence $\max C=\{evf:v$ is a strong von Neumann inverse of\ $%
fa^{(1)}e\}$.
\end{proof}

We now provide an example to illustrate the previous theorem. \ 

\begin{example}
Note that we are choosing $f$ to be of rank two.\ \ So any maximal element
will have, at most, rank two. \ Choose $e= 
\begin{bmatrix}
1 & 0 & 0 & 0 \\ 
0 & \frac{1}{2} & \frac{1}{2} & 0 \\ 
0 & \frac{1}{2} & \frac{1}{2} & 0 \\ 
0 & 0 & 0 & 1
\end{bmatrix}
$ and $f= 
\begin{bmatrix}
\frac{1}{2} & \frac{1}{4} & 0 & 0 \\ 
1 & \frac{1}{2} & 0 & 0 \\ 
0 & 0 & 0 & 0 \\ 
0 & 0 & 0 & 1
\end{bmatrix}
$. \ Suppose $a= 
\begin{bmatrix}
1 & 0 & 0 & 0 \\ 
0 & 1 & 0 & 0 \\ 
0 & 0 & 1 & 0 \\ 
0 & 0 & 0 & 1
\end{bmatrix}
$. $\ $Then one choice for $a^{(1)}$ is $a^{(1)}= 
\begin{bmatrix}
1 & 0 & 0 & 0 \\ 
0 & 1 & 0 & 0 \\ 
0 & 0 & 1 & 0 \\ 
0 & 0 & 0 & 1
\end{bmatrix}
$ and $fa^{(1)}e= 
\begin{bmatrix}
\frac{1}{2} & \frac{1}{8} & \frac{1}{8} & 0 \\ 
1 & \frac{1}{4} & \frac{1}{4} & 0 \\ 
0 & 0 & 0 & 0 \\ 
0 & 0 & 0 & 1
\end{bmatrix}
$. \ For our choice of a strong von Neumann inverse of $fa^{(1)}e$, we first
choose its Moore-Penrose inverse and later its group inverse, as both are
also strong von Neumann inverses. \ Let $v_{1}$ be the Moore-Penrose inverse
of $fa^{(1)}e$. \ Then $v_{1}= 
\begin{bmatrix}
\frac{16}{45} & \frac{32}{45} & 0 & 0 \\ 
\frac{4}{45} & \frac{8}{45} & 0 & 0 \\ 
\frac{4}{45} & \frac{8}{45} & 0 & 0 \\ 
0 & 0 & 0 & 1
\end{bmatrix}
$ and $ev_{1}f= 
\begin{bmatrix}
\frac{8}{9} & \frac{4}{9} & 0 & 0 \\ 
\frac{2}{9} & \frac{1}{9} & 0 & 0 \\ 
\frac{2}{9} & \frac{1}{9} & 0 & 0 \\ 
0 & 0 & 0 & 1
\end{bmatrix}
$. \ Now $ev_{1}f\leq ^{-}a$ because $rank(a-ev_{1}f)=2=4-2=rank\left(
a\right) -rank(ev_{1}f)$. \ Thus $ev_{1}f\in $\bigskip $\max C$.

We now find another element of $\max C$. \ The group-inverse $v_{2}$ of $%
fa^{(1)}e$ is $v_{2}= 
\begin{bmatrix}
\frac{8}{9} & \frac{2}{9} & \frac{2}{9} & 0 \\ 
\frac{16}{9} & \frac{4}{9} & \frac{4}{9} & 0 \\ 
0 & 0 & 0 & 0 \\ 
0 & 0 & 0 & 1
\end{bmatrix}
$. \ Then $ev_{2}f= 
\begin{bmatrix}
\frac{2}{3} & \frac{1}{3} & 0 & 0 \\ 
\frac{2}{3} & \frac{1}{3} & 0 & 0 \\ 
\frac{2}{3} & \frac{1}{3} & 0 & 0 \\ 
0 & 0 & 0 & 1
\end{bmatrix}
$. \ Now $ev_{2}f\leq ^{-}a$ because $rank(a-ev_{2}f)=2=4-2=rank\left(
a\right) -rank(ev_{2}f)$. \ Thus $ev_{2}f\in $\bigskip $\max C$. \ \ 
\end{example}

\section{An Application}

In this section, as an application of our main theorem on maximal elements,
we derive the unique shorted operator $a_{S}$ of Anderson-Trapp (See \cite{A2}, Theorem 1)
that was also studied by Mitra-Puri (See \cite{M3}, Theorem 2.1). \ We
believe that there will be other such applications.

Throughout this section $R$ will denote the ring of $n\times n$ matrices
over the field of complex numbers, $\mathbb{C}$. \ For any matrix or vector $%
u$, $u^{\ast }$ will denote the \textit{conjugate transpose} of $u$. \ In
this section $S$ will denote the set of positive semidefinite matrices. \ 

Recall, the \textit{Loewner order}, $\leq _{L}$, on the set $S$ of positive
semidefinite matrices in $R$ is defined as follows: for $a,b\in S$, $a\leq
_{L}b$ if $b-a\in S$. \ 

Suppose $a\in S$ and $c\in R$. \ As in the previous section, $eR=aR\cap cR$, 
$e=e^{2},$ and choose $f=e^{\ast }$. Clearly, $\ f\in Ra$ because $a$ is
hermitian. \ Let $C_{L}=\{s\in eRf\cap S:s\leq _{L}a\}=\{s\in eSf:s\leq
_{L}a\}$.

Under this terminology, the set $C$ in the previous section will become, $%
C=\{s\in eSf:s\leq ^{\oplus }a\}$. \ 

We will assume that $a\notin eSf$. This is equivalent to the assumption that $rank(e)\neq rank(a)$, as shown
in the remark below.

\begin{remark}
\label{p19}$rank(e)=rank(a)$ if and only if $a\in eSf.$

\begin{proof}
Suppose $rank(e)=rank(a)$. \ So $eR=aR$ as $eR\subseteq aR$. \ Then $a=ex$
for some $x\in R$ and by taking conjugates, $a=x^{\ast }e^{\ast }$, $i.e.$, $%
a\in Re^{\ast }$. \ Hence, $a\in eRe^{\ast }$. \ As $a\in S$, $a\in S\cap
eRe^{\ast }=eSe^{\ast }$. \ For if $exe^{\ast }\in S$ then $exe^{\ast
}=e\left( exe^{\ast }\right) e^{\ast }\in eSe^{\ast }$ and so $S\cap
eRe^{\ast }\subseteq eSe^{\ast }$. \ The reverse inclusion is obvious.

Conversely, suppose $a\in eSf$. \ As $eR=aR\cap cR$, we have $e=ax$ and so $%
rank(e)\leq rank(a)$. \ As $a\in eSf$, $a=ese^{\ast }$ for some $s\in S$. \
Therefore $rank(a)\leq rank(e)$. \ Hence, $rank(e)=rank(a)$.
\end{proof}
\end{remark}

The following lemma is folklore.

\begin{lemma}
\label{p20}Suppose $a,b\in S$. \ If $a\leq ^{\oplus }b$ then $a\leq _{L}b.$

\begin{proof}
Suppose $a\leq ^{\oplus }b$. \ Equivalently, $\left( b-a\right) \leq
^{\oplus }b$ and by Lemma \ref{p3} we know that $\{b^{(1)}\}\subseteq
\{\left( b-a\right) ^{(1)}\}$. \ Thus, $b^{\dag }$ is a von Neumann inverse
of $\left( b-a\right) $. \ From \cite{L3}, as $b$ is positive semidefinite, $%
b^{\dag }$ is positive semidefinite. \ Thus $b-a=\left( b-a\right) b^{\dag
}\left( b-a\right) \geq _{L}0$. \ Hence $\left( b-a\right) \in S$ and $a\leq
_{L}b$.
\end{proof}
\end{lemma}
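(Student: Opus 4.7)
The plan is to exploit the symmetry of the direct sum decomposition together with the Moore--Penrose inverse to rewrite $b-a$ as a congruence $(b-a)^\ast X (b-a)$ where $X$ is positive semidefinite; this immediately gives $b-a \in S$.

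First, I would observe that the definition $a \leq^{\oplus} b$, namely $bR = aR \oplus (b-a)R$, is symmetric in $a$ and $b-a$: swapping the roles of the two summands shows $(b-a) \leq^{\oplus} b$ as well. Applying Lemma \ref{p3} to this second inequality yields $\{b^{(1)}\} \subseteq \{(b-a)^{(1)}\}$. In particular, the Moore--Penrose inverse $b^{\dagger}$ is an element of $\{b^{(1)}\}$, so it is a von Neumann inverse of $b-a$; that is,
\[
(b-a)\,b^{\dagger}\,(b-a) \;=\; b-a.
\]

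Next, because $a,b \in S$ are Hermitian, $b-a$ is Hermitian, so $(b-a)^\ast = b-a$. Therefore
\[
b - a \;=\; (b-a)^\ast\, b^{\dagger}\, (b-a).
\]
Invoking the standard fact (cited as \cite{L3}) that the Moore--Penrose inverse of a positive semidefinite matrix is itself positive semidefinite, we have $b^{\dagger} \in S$. A congruence $u^\ast X u$ of a positive semidefinite $X$ is positive semidefinite, hence $b - a \in S$, i.e., $a \leq_L b$.

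There is no real obstacle; the only content is the identification of the right sandwich representation of $b-a$. The mild subtlety is noticing that one should apply Lemma \ref{p3} to the complementary inequality $(b-a) \leq^{\oplus} b$ rather than to $a \leq^{\oplus} b$ directly, since it is $b-a$ (not $a$) that we want to express as a product involving $b^{\dagger}$.
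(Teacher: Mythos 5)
Your proposal is correct and follows essentially the same route as the paper: pass to $(b-a)\leq^{\oplus}b$, apply Lemma \ref{p3} to get $(b-a)b^{\dagger}(b-a)=b-a$, and conclude positive semidefiniteness from that of $b^{\dagger}$. You merely make explicit the step the paper leaves implicit, namely that the sandwich is a congruence because $b-a$ is Hermitian.
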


\begin{theorem}
\label{p21}Let $a\in S$ \ and let $f_{a}^{\dag }$ be the $a$-weighted
Moore-Penrose inverse of $f$. \ Then $\ \max
C=\max C_{L}=\{af_{a}^{\dag }f\}.$
\end{theorem}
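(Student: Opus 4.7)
The plan is to prove the theorem in two passes: first identify $af_{a}^{\dag }f$ as the unique element of $\max C_{L}$, and then show that every element of $\max C$ coincides with it by combining Lemma \ref{p20} with Lemmas \ref{p3} and \ref{p12}. Abbreviate $p:=f_{a}^{\dag }f$; the strong-inverse identity $f_{a}^{\dag }ff_{a}^{\dag }=f_{a}^{\dag }$ makes $p$ idempotent. Two other ingredients are recorded at the outset: since $eR=aR\cap cR\subseteq aR$ we can write $e=au$, so $f=e^{\ast }=u^{\ast }a\in Ra$, giving $fa^{(1)}a=f$ for every choice of $a^{(1)}$; and the weighted identity $(af_{a}^{\dag }f)^{\ast }=af_{a}^{\dag }f$ combined with $p^{\ast }=e(f_{a}^{\dag })^{\ast }$ yields $ap=p^{\ast }a$, whence $ap=p^{\ast }ap$.

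First I would verify $af_{a}^{\dag }f\in C$. Membership in $eR$ comes from $ap=p^{\ast }a$, and membership in $Rf$ is immediate. Positive semidefiniteness falls out of the identities above:
\begin{equation*}
x^{\ast }(ap)x=x^{\ast }(p^{\ast }ap)x=(px)^{\ast }a(px)\geq 0.
\end{equation*}
Finally $af_{a}^{\dag }f\leq ^{\oplus }a$ reduces via Lemma \ref{p3} to $(af_{a}^{\dag }f)a^{(1)}(af_{a}^{\dag }f)=af_{a}^{\dag }f$, and indeed $af_{a}^{\dag }(fa^{(1)}a)f_{a}^{\dag }f=a(f_{a}^{\dag }ff_{a}^{\dag })f=af_{a}^{\dag }f$ using $fa^{(1)}a=f$ and $f_{a}^{\dag }ff_{a}^{\dag }=f_{a}^{\dag }$.

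Next I would show $\max C_{L}=\{af_{a}^{\dag }f\}$. For any $y\in C_{L}$, membership $y\in eRf$ combined with $ff_{a}^{\dag }f=f$ gives $yp=y$, and Hermiticity of $y$ then gives $p^{\ast }y=y$; in particular $yp=p^{\ast }yp$. Consequently
\begin{equation*}
af_{a}^{\dag }f-y=ap-yp=p^{\ast }ap-p^{\ast }yp=p^{\ast }(a-y)p,
\end{equation*}
which is PSD because $a-y\geq _{L}0$ and the right-hand side is a congruence of it by $p$. Hence $y\leq _{L}af_{a}^{\dag }f$, so $af_{a}^{\dag }f$ is the Loewner maximum of $C_{L}$, and it is unique by antisymmetry of $\leq _{L}$.

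Finally I would transfer the conclusion to $\max C$. That $af_{a}^{\dag }f\in \max C$ follows because $af_{a}^{\dag }f\leq ^{\oplus }y\in C$ implies $af_{a}^{\dag }f\leq _{L}y\leq _{L}af_{a}^{\dag }f$ by Lemma \ref{p20} and the previous paragraph, forcing $y=af_{a}^{\dag }f$. For uniqueness, let $x\in \max C$. Then $x\leq _{L}af_{a}^{\dag }f$, and for PSD matrices this yields the range inclusions $xR\subseteq (af_{a}^{\dag }f)R$ and $Rx\subseteq R(af_{a}^{\dag }f)$. Since both $x\leq ^{\oplus }a$ and $af_{a}^{\dag }f\leq ^{\oplus }a$, Lemma \ref{p3} furnishes a common von Neumann inverse $a^{(1)}\in \{x^{(1)}\}\cap \{(af_{a}^{\dag }f)^{(1)}\}$, and Lemma \ref{p12} then upgrades the inclusions to $x\leq ^{\oplus }af_{a}^{\dag }f$; maximality of $x$ in $C$ gives $x=af_{a}^{\dag }f$. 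The main obstacle is the second step --- producing the symmetric sandwich $p^{\ast }(a-y)p$ --- which rests on the full strength of the weighted Moore-Penrose identities combined with the constraint $y\in eRf$.
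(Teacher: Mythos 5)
Your proof is essentially correct, and its first two thirds (hermiticity and positive semidefiniteness of $af_{a}^{\dag}f$, the computation $(af_{a}^{\dag}f)a^{(1)}(af_{a}^{\dag}f)=af_{a}^{\dag}f$ via $fa^{(1)}a=f$, and the congruence $af_{a}^{\dag}f-y=p^{\ast}(a-y)p$ giving Loewner maximality in $C_{L}$) coincide with the paper's argument almost verbatim. Where you genuinely diverge is the treatment of $\max C$. The paper first invokes Theorem \ref{p17} to write $\max C=\{evf: v \mbox{ a strong inverse of } fa^{(1)}e\}$, checks that in this setting $fa^{(1)}e$ is independent of $a^{(1)}$ and $evf$ is independent of $v$, so that $\max C$ is the singleton $\{e(fa^{\dag}e)^{\dag}f\}$, and only then identifies that element with $af_{a}^{\dag}f$ by a two-sided Loewner sandwich. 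You bypass Theorem \ref{p17} entirely: from $x\leq_{L}af_{a}^{\dag}f$ you extract the range inclusions $xR\subseteq (af_{a}^{\dag}f)R$ and $Rx\subseteq R(af_{a}^{\dag}f)$, and then, since any $a^{(1)}$ is a common inner inverse of $x$ and $af_{a}^{\dag}f$ by Lemma \ref{p3}, Lemma \ref{p12} upgrades these to $x\leq^{\oplus}af_{a}^{\dag}f$. This is a legitimate and arguably more elementary route; what it costs is (i) the alternative closed form $e(fa^{\dag}e)^{\dag}f$ and the explicit link to the main theorem, which is the point of the section, and (ii) one extra folklore input --- that for positive semidefinite matrices $0\leq_{L}x\leq_{L}z$ forces $\ker z\subseteq\ker x$ and hence column- and row-space inclusion --- which is true but nowhere stated in the paper and should be proved or cited.

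Two small points to tighten. The paper's definition of a maximal element requires $b\neq a$ and lets the comparison element $c$ in $b\leq^{\oplus}c\leq^{\oplus}a$ range over more than the positive semidefinite set, whereas both halves of your maximality argument only test against elements of $eSf$; you should say explicitly that you are taking $\max C$ relative to $C=\{s\in eSf:s\leq^{\oplus}a\}$, and in any case add the one-line observation that $af_{a}^{\dag}f\in eSf$ while $a\notin eSf$, so $af_{a}^{\dag}f\neq a$ --- this is also needed in your final step, where $x\leq^{\oplus}af_{a}^{\dag}f\leq^{\oplus}a$ and maximality of $x$ yield $x=af_{a}^{\dag}f$ only after the possibility $af_{a}^{\dag}f=a$ has been excluded.
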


\begin{proof}
By Theorem 13, $\max C=\{evf:v$ is a strong von Neumann inverse of $%
fa^{(1)}e\}$. \ By assumption, $e\in aR$ and so $e=ax$ for some $x\in R$. \
By taking conjugates, $e^{\ast }=x^{\ast }a$ as $a\in S$. \ In addition, as $%
f\in Ra$, $f=ya$ for some $y\in R$. \ This yields that $%
fa^{(1)}e=yaa^{(1)}ax=yax$ and thus $fa^{(1)}e$ is independent of the choice
of $a^{(1)}$. \ We may then choose the Moore-Penrose inverse $a^{\dag }$ for 
$a^{(1)}$. \ Next, we want to show that a strong von Neumann inverse of $%
fa^{\dag }e$ is also unique. \ Note that $fa^{\dag }e=e^{\ast }a^{\dag }e$
is positive semidefinite, as the Moore-Penrose inverse of a positive
semidefinite element is positive semidefinite \cite{L3}. \ As $a\in S$, we
can write $a=zz^{\ast }$ for some $z\in R$. \ Now $fR=yaR=yaa^{\dag
}aR=fa^{\dag }aR=fa^{\dag }R=fzz^{\ast }R=fzR=\left( fz\right) \left(
fz\right) ^{\ast }R=fzz^{\ast }f^{\ast }R=fa^{\dag }eR$. \ Similarly $%
Re=Rfa^{\dag }e$. \ It follows that $f=fa^{\dag }ep$ and $e=qfa^{\dag }e$
for some $p,q\in R$. \ Consider an element $evf\in \max C$. $\ $Then $%
evf=qfa^{\dag }evfa^{\dag }ep=qfa^{\dag }ep,$ showing that $evf$ is
independent of the choice of strong von Neumann inverse $v$ of $fa^{\dag }e$%
. \ Thus $\max C$ is a singleton set consisting of the element $e\left(
fa^{\dag }e\right) ^{\dag }f$. \ Since $a\in S$, $a^{\dag }\in S$ and hence $%
e\left( e^{\ast }a^{\dag }e\right) ^{^{\dag }}f=e\left( fa^{\dag }e\right)
^{\dag }f\in S$.

Next, we proceed to show that $\max C=$ $\{af_{a}^{\dag }f\}$ also. \ Recall
that $af_{a}^{\dag }f$ is hermitian and so $af_{a}^{\dag }f=\left(
af_{a}^{\dag }f\right) ^{\ast }=f^{\ast }\left( f_{a}^{\dag }\right) ^{\ast
}a^{\ast }=\left( f_{a}^{\dag }f\right) ^{\ast }a$. \ Since $f_{a}^{\dag }f$
is an idempotent, we get $af_{a}^{\dag }f=a(f_{a}^{\dag }f)(f_{a}^{\dag
}f)=\left( f_{a}^{\dag }f\right) ^{\ast }a(f_{a}^{\dag }f)$ and thus $%
af_{a}^{\dag }f\in S$. \ 

We now prove that $af_{a}^{\dag }f\leq ^{\oplus }a$. \ Let $a^{(1)}$ be an
arbitrary von Neumann inverse of $a$. \ Then $\left( af_{a}^{\dag }f\right)
a^{(1)}\left( af_{a}^{\dag }f\right) =(af_{a}^{\dag })(ya)a^{(1)}\left(
af_{a}^{\dag }f\right) =(af_{a}^{\dag }y)aa^{(1)}a(f_{a}^{\dag
}f)=af_{a}^{\dag }yaf_{a}^{\dag }f=af_{a}^{\dag }ff_{a}^{\dag
}f=af_{a}^{\dag }f$. \ Hence $\{a^{(1)}\}\subseteq \{\left( af_{a}^{\dag
}f\right) ^{(1)}\}.$ Consequently, by Lemma 3, $af_{a}^{\dag }f\leq ^{\oplus
}a$ which gives $af_{a}^{\dag }f\in C$.

Furthermore, by Lemma 16, $af_{a}^{\dag }f\leq ^{\oplus }a$ gives $%
af_{a}^{\dag }f\leq _{L}a$ and hence $af_{a}^{\dag }f\in C_{L}.$

Finally, we show that for every $d\in C_{L}$, $d\leq _{L}af_{a}^{\dag }f$. \
As $d\in S\subseteq Rf$, write $d=uf$ for some $u\in R$. \ Then $%
df_{a}^{\dag }f=uff_{a}^{\dag }f=uf=d=(f_{a}^{\dag }f)^{\ast }d\left(
f_{a}^{\dag }f\right) $ as $d$ is hermitian. \ Now consider $af_{a}^{\dag
}f-d=\left( f_{a}^{\dag }f\right) ^{\ast }a\left( f_{a}^{\dag }f\right)
-\left( f_{a}^{\dag }f\right) ^{\ast }d\left( f_{a}^{\dag }f\right) =\left(
f_{a}^{\dag }f\right) ^{\ast }\left( a-d\right) \left( f_{a}^{\dag }f\right)
,$ which is positive semidefinite and thus $af_{a}^{\dag }f-d\in S$. \ Hence 
$d\leq _{L}af_{a}^{\dag }f$.

Thus $af_{a}^{\dag }f$ is the unique maximal element in $C_{L}$ provided $%
af_{a}^{\dag }f\neq a$. We have shown above that $af_{a}^{\dag }f\in C_{L}$
and thus $af_{a}^{\dag }f\in eSf$. \ But by assumption $a\notin eSf$ . \ So $%
af_{a}^{\dag }f\neq a$. Therefore, $af_{a}^{\dag }f$ is unique maximal
element in $C_{L}$ and it also belongs to $C$ as we have already proven that 
$af_{a}^{\dag }f\leq ^{\oplus }a$.

Now, because $e\left( fa^{\dag }e\right) ^{\dag }f$ is the unique maximal
element in $C\ $and $af_{a}^{\dag }f\in C$, $af_{a}^{\dag }f$ $\leq ^{\oplus
}e\left( fa^{\dag }e\right) ^{\dag }f$ . \ By Lemma 16, $af_{a}^{\dag }f\leq
_{L}e\left( fa^{\dag }e\right) ^{\dag }f$ as $e\left( fa^{\dag }e\right)
^{\dag }f\in C_{L}$. \ We have shown above that for every element $d\in C_{L}
$, $d\leq _{L}$ $af_{a}^{\dag }f$ and thus $af_{a}^{\dag }f$ $=$ $e\left(
fa^{\dag }e\right) ^{\dag }f$.\ \ Hence, $\max C=\max C_{L}=\{af_{a}^{\dag
}f\}$ as desired.
\end{proof}

\bigskip

The following examples demonstrate the result proved in the previous
theorem, i.e. $af_{a}^{\dag }f$ $=$ $e\left( fa^{\dag }e\right) ^{\dag }f$
and so $\max C=\max C_{L}=\{af_{a}^{\dag }f\}$. \ Furthermore, $\max C$
agrees with the formula given by Anderson-Trapp for computing the shorted
operator $a_{S}$ when we are given the impedance matrix $a.$

\ The Anderson-Trapp formula states that if $a$ is the $n\times n$ impedance
matrix then the shorted operator of $a$ with respect to the $k$-dimensional
subspace $S$ \ (shorting $n-k$ ports) is given by $a_{S}= 
\begin{bmatrix}
a_{11}-a_{12}a_{22}^{\dag }a_{21} & 0 \\ 
0 & 0
\end{bmatrix}
$, where $a$ is partitioned as $a= 
\begin{bmatrix}
a_{11} & a_{12} \\ 
a_{21} & a_{22}
\end{bmatrix}
$ such that $a_{11}$ is a $k\times k$ matrix. \ We show that the maximum
element $af_{a}^{\dag }f$ obtained by us\ is permutation equivalent to $%
a_{S} $, i.e. $P^{T}af_{a}^{\dag }f$ $P=a_{S}$ for some permutation matrix $%
P.$

\bigskip

\begin{example}
Let $e= 
\begin{bmatrix}
\frac{1}{2} & 0 & \frac{1}{2} & 0 \\ 
0 & 0 & 0 & 0 \\ 
\frac{1}{2} & 0 & \frac{1}{2} & 0 \\ 
0 & 0 & 0 & 1
\end{bmatrix}
$ and then $f=e^{\ast }= 
\begin{bmatrix}
\frac{1}{2} & 0 & \frac{1}{2} & 0 \\ 
0 & 0 & 0 & 0 \\ 
\frac{1}{2} & 0 & \frac{1}{2} & 0 \\ 
0 & 0 & 0 & 1
\end{bmatrix}
$. \ Suppose $a= 
\begin{bmatrix}
1 & 0 & 1 & 0 \\ 
0 & 1 & 0 & 0 \\ 
1 & 0 & 1 & 0 \\ 
0 & 0 & 0 & 1
\end{bmatrix}
$. $\ $\ Then one may check that $f_{a}^{\dag }=f= 
\begin{bmatrix}
\frac{1}{2} & 0 & \frac{1}{2} & 0 \\ 
0 & 0 & 0 & 0 \\ 
\frac{1}{2} & 0 & \frac{1}{2} & 0 \\ 
0 & 0 & 0 & 1
\end{bmatrix}
$. \ 

\ So $af_{a}^{\dag }f$ $= 
\begin{bmatrix}
1 & 0 & 1 & 0 \\ 
0 & 0 & 0 & 0 \\ 
1 & 0 & 1 & 0 \\ 
0 & 0 & 0 & 1
\end{bmatrix}
$.\bigskip\ \ We now show that $af_{a}^{\dag }f=$ $e\left( fa^{\dag
}e\right) ^{\dag }f.$ Now, $a^{\dag }= 
\begin{bmatrix}
\frac{1}{4} & 0 & \frac{1}{4} & 0 \\ 
0 & 1 & 0 & 0 \\ 
\frac{1}{4} & 0 & \frac{1}{4} & 0 \\ 
0 & 0 & 0 & 1
\end{bmatrix}
$ and $\left( fa^{\dag }e\right) ^{\dag }= 
\begin{bmatrix}
1 & 0 & 1 & 0 \\ 
0 & 0 & 0 & 0 \\ 
1 & 0 & 1 & 0 \\ 
0 & 0 & 0 & 1
\end{bmatrix}
$. \ Thus $e\left( fa^{\dag }e\right) ^{\dag }f= 
\begin{bmatrix}
1 & 0 & 1 & 0 \\ 
0 & 0 & 0 & 0 \\ 
1 & 0 & 1 & 0 \\ 
0 & 0 & 0 & 1
\end{bmatrix}
$. \ Hence $e\left( fa^{\dag }e\right) ^{\dag }f= 
\begin{bmatrix}
1 & 0 & 1 & 0 \\ 
0 & 0 & 0 & 0 \\ 
1 & 0 & 1 & 0 \\ 
0 & 0 & 0 & 1
\end{bmatrix}
=af_{a}^{\dag }f$ as proved in the theorem. \ We may verify that $%
af_{a}^{\dag }f\leq ^{\oplus }a$. \ This follows from $rank(a)-rank(af_{a}^{%
\dag }f)=3-2=1=rank(a-af_{a}^{\dag }f)$. \ We know then $af_{a}^{\dag }f\leq
_{L}a$. \ Thus $\max C=\max C_{L}=\{af_{a}^{\dag }f\}$. \ 

We now compute the shorted operator as given by Anderson-Trapp. \ We
partition $a$ as follows: \ $a= 
\begin{bmatrix}
\begin{bmatrix}
1 & 0 & 1 \\ 
0 & 1 & 0 \\ 
1 & 0 & 1
\end{bmatrix}
& 
\begin{bmatrix}
0 \\ 
0 \\ 
0
\end{bmatrix}
\\ 
\begin{bmatrix}
0 & 0 & 0
\end{bmatrix}
& 
\begin{bmatrix}
1
\end{bmatrix}
\end{bmatrix}
$. \ 

Then $a_{S}= 
\begin{bmatrix}
\begin{bmatrix}
1 & 0 & 1 \\ 
0 & 1 & 0 \\ 
1 & 0 & 1
\end{bmatrix}
- 
\begin{bmatrix}
0 \\ 
0 \\ 
0
\end{bmatrix}
\begin{bmatrix}
1
\end{bmatrix}
^{\dag } 
\begin{bmatrix}
0 & 0 & 0
\end{bmatrix}
& 0 \\ 
0 & 0
\end{bmatrix}
= 
\begin{bmatrix}
1 & 0 & 1 & 0 \\ 
0 & 1 & 0 & 0 \\ 
1 & 0 & 1 & 0 \\ 
0 & 0 & 0 & 0
\end{bmatrix}
$. \ Now for $P= 
\begin{bmatrix}
1 & 0 & 0 & 0 \\ 
0 & 0 & 0 & 1 \\ 
0 & 0 & 1 & 0 \\ 
0 & 1 & 0 & 0
\end{bmatrix}
$, $Paf_{a}^{\dag }fP^{T}=a_{S}$.
\end{example}

\bigskip

\bigskip

\end{document}